\newcommand{\R}{\mathbb{R}}
\newcommand{\V}{\mathbf{V}}
\newcommand{\X}{\mathbf{X}}
\newcommand{\U}{\mathbf{U}}
\newcommand{\D}{\mathbf{D}}
\newcommand{\Q}{\mathbf{Q}}
\newcommand{\Po}{\mathbf{P}}
\newcommand{\pN}{\text{pN}}
\newcommand{\e}{\mathbf{e}}
\newcommand{\blambda}{\boldsymbol{\lambda}}
\newcommand{\Rintar}{R_{\text{int}}^{\text{ar}}}
\newcommand{\Rintpo}{R_{\text{int}}^{\text{po}}}
\renewcommand{\leq}{\leqslant}
\renewcommand{\geq}{\geqslant}
\newcommand{\Proj}{\text{Proj}}
\newcommand{\microm}{\mu\text{m}}
\newcommand{\rad}{\text{rad}}
\newcommand{\h}{\text{h}}
\newcommand{\phirot}{\phi_{\text{rot}}}
\newcommand{\norme}[1]{\left\Vert #1\right\Vert}
\newcommand{\Mov}[1]{Suppl. Mov.\;#1}
\newtheorem{prop}{Proposition}
\newtheorem{remark}{Remark}
\newcommand{\reviewerA}[1]{#1}
\newcommand{\reviewerB}[1]{#1}
\newcommand{\reviewerAA}[1]{#1}
\date{}
\title{From flocking to jamming in collective cell dynamics: a Vicsek-like model including contact forces}
\author[1,2]{Laurent Navoret}
\author[1,2]{Roxana Sublet}
\author[3]{Marcela Szopos}
\affil[1]{IRMA, UMR 7501 Université de Strasbourg et CNRS, 7 rue René Descartes, 67000 Strasbourg, France}
\affil[2]{Inria, IRMA, Université de Strasbourg, CNRS UMR 7501, 7 rue René Descartes, 67084 Strasbourg, France}
\affil[3]{Université Paris Cité, CNRS, MAP5, F-75006 Paris, France}
\begin{document}
\maketitle

\begin{abstract}
The goal of the present work is to propose an agent-based model that originally combines classical Vicsek-like polarity alignments and contact forces, as implemented in the framework developed by Maury and Venel in \cite{MauryVenel2011}. The description additionally incorporates velocity feedback on polarity and soft attraction-repulsion interactions. After carefully studying the well posedness of the \reviewerA{model}, we introduce a suitable discretization and perform an extensive range of numerical experiments to assess the impact of different modeling ingredients. The dynamical system is capable of recovering the order-disorder phase transition of the flock, as well as the jamming effect in high density regimes. As such, the developed framework can be seen as a promising theoretical tool that could contribute to improving the understanding of complex collective cell dynamics and emerging tissue flows.
\end{abstract}

\section{Introduction}

Collective dynamics in biological systems and emergence of patterns are ubiquitous in nature. In animal groups, collective movements were identified as key mechanisms to facilitate navigation, access to food resources or protection and defense \cite{koch2011collective, Vicsek2012}. Pedestrian displacement and collective interactions are of particular interest when dealing with crowd and traffic management \cite{MauryVenel2011,feliciani2022introduction,helbing2005self}. Cell movements are observed during different important phases of epithelial tissue remodeling, such as embryogenesis, or in pathological situations, such as cancer or wound healing \cite{friedl2009collective}. Hundreds or even thousands of cells move in a coordinate manner and tissue-scale flows emerge. However, the underlying mechanisms are still poorly understood and are the subject of active research.

The present contribution focuses on the modeling of collective cell dynamics. In recent years, various physical models have been proposed in parallel with biological experiments to analyze the emergence of such movements and to understand how they can result from the interactions between individual cells. Some of these models attempt to describe the dynamics directly at the macroscopic level. This is the case of active gel type models \cite{marchetti2013hydrodynamics}, where the collective motion results from the polarity vector field. Other approaches rather propose an individual-based description of the cellular dynamics: in the particle model, each cell is described by its position, its velocity and its interactions with its neighbors, while in vertex model, cells are described as polygons. Cell activity is described at a mesoscopic scale by introducing some active forces and possibly attaching polarity vectors or nematic directions to each particle, to summarize the internal cell structures. We refer to \cite{graner} for a recent review and a comparative description of such models and to \cite{Deutschetal2020} for the multiscale aspects and their biological relevance.

In the present work, we propose a particular model from the category of individual based models, namely a two-dimensional Vicsek-type approach that includes jamming effects. Specifically, the original Vicsek model is based on a pointwise particle description of the cellular dynamics, in which the velocity alignment is in competition with the angular diffusion \cite{vicsek1995novel}. A particular feature of this model is that it leads to a phase transition between disordered and ordered phases as the ratio between alignment and diffusion increases. We further refer to \cite{chate2020dry} and the references therein for a review on its properties. Considered as a minimal model for collective displacement, such an approach has been successfully used to depict cell flows~\cite{hakim2017collective}, with Vicsek alignment interactions either on the velocities or on polarity vectors. Many variants have been subsequently proposed to enrich the description of different phenomena, such as attraction-repulsion interactions \cite{chate2008modeling,degond2015macroscopic,tarle2015modeling}, or cell division \cite{tarle2015modeling}. We also emphasize that the large-scale behavior of the model is still the subject of an intense research activity: macroscopic version of Vicsek-type models have been derived in ~\cite{degond2008continuum,degond2013macroscopic, BriantetalSIAM2022}.

A key point for an accurate description of cell dynamics in such particle models is the question of properly taking into account the volume exclusion constraint, which expresses the fact that cells cannot overlap during their displacement. A commonly used strategy, sometimes called in the literature the ``soft'' contact model, is to add short-range repulsive forces between particles. Their description possibly incorporates a stiff component to account for some limit in the cell two-dimensional compressibility, expressed as a potential with a singular behavior at a finite distance of the cell center, see for instance~\cite{Dorsogna2006,carrillo2014explicit}. These approaches have proved to be quite effective in many situations,
but they might require fine-tuning of the modeling parameters and the stiffness induces strong constraints on the numerical discretization. Alternatively, contact forces can be incorporated in the model as proposed in~\cite{MauryVenel2011} for pedestrians dynamics, strategy identified in the literature as the ``hard'' contact model.
Each particle is assimilated to a disk and the contact forces result from the projection of the velocity onto the set of the so-called admissible velocities. As a consequence, the resulting vector field provides the velocities compatible with the non-overlapping constraint. Interestingly, it has been shown that models incorporating this feature are able to recover some jamming effect as the `Faster is Slower'' effect in the context of an evacuation scenario \cite{faure2015crowd, Redaetal2024faster}. Additionally, non-spherical particles can also be considered \cite{bloch2023convex,bloch2023new} in such a framework. However, even though some equivalent macroscopic models have been proposed, they are not able to recover all the microscopic jamming configurations, as recently discussed in \cite{maury2023defective}.

In this challenging context, we have previously proposed and validated against experimental data an original model combining a Vicsek-type description of the cell dynamics and a ``hard'' contact model~\cite{nature_phys}. The advantage of this strategy lies in the possibility to strongly enforce the geometric constraints, but as a counterpart, a more complex mathematical and numerical framework is required to handle weak differential inclusions and constraint optimization problems. In the present paper, we further develop an in-depth analysis of the properties of the model at the continuous level and a systematic numerical exploration of its behavior, with emphasis on the ability of the proposed model to recapitulate the phase transition from flocking to jamming. In addition, soft local attraction-repulsion interactions are here introduced to model elastic interactions between cells, as proposed in~\cite{graner}, since the amplitude of these interactions compared with the Vicsek ones could have a significant impact on the collective dynamics, passing from collective migration to elastic regimes. Numerical simulations are performed to assess this transition. At the discrete level, following the method introduced in \cite{collision_maury}, the effective velocities are computed by solving a projection problem after linearizing the constraints using the Uzawa algorithm. In the case of spherical particles, this process ensures that cells do not overlap at each time iteration of the algorithm. This method is subsequently coupled with a semi-implicit particle method introduced in \cite{numerical_polarity} for the particle Vicsek dynamics.

While we show that this model is able to recover order-disorder phase transition, similarly to the classical Vicsek model~\cite{vicsek1995novel,chate2020dry},  additional confinement is required to observe jamming effects. In~\cite{nature_phys}, cells were confined in self-generated surrounding acto-myosin cables. Here, cells are confined in a given bounded domain and cell-wall contact forces at the boundaries prevent them from escaping. With these interaction rules, cells are expected to rapidly stick to the border, and consequently make them difficult to move. Instead of adding specific boundary behaviors, we rather rely on a relaxation mechanism of the polarity towards the velocity direction and demonstrate its ability to unlock such a configuration. Note that the resulting model is similar to the one proposed in \cite{degond2015macroscopic}, except that the repulsion interactions are here treated as contact forces. Finally, to further explore how the geometrical and topological properties of the environment impact the collective dynamics, we add fixed obstacles inside the domain. The contact with these additional boundaries is incorporated in a straightforward manner in the hard contact framework we have previously developed. The present work thus makes a new
contribution to collective dynamics models that exhibit non-trivial topological states \cite{Shankar_2017,bowick2022symmetry,degond2022bulk}.

\reviewerB{\textbf{Aim of the paper.} To summarize, the goal of the paper is twofold: (i) first, to provide a thorough mathematical analysis of the well-posedness of the model combining Vicsek dynamics with contact forces, (ii) second, to numerically explore the impact of congestion on the emergence of flocking  or rotation motions in bounded domains with periodic or rigid boundaries. Specifically, we aim to identify the effects of the velocity feedback on polarity, of the shape of the domain and of the soft attraction-repulsion force on the overall dynamics.}

The outline of the paper is the following. In Section~\ref{sec:model_analysis}, we describe the new individual-based model we introduce in the present work and provide a well-posedness analysis of a regularized version of this model. In Section~\ref{sec:numericalmethod}, we introduce a semi-implicit numerical discretization of the model and the practical explicit reformulation we implemented. In Section~\ref{sec:numericalresults}, several numerical test cases are carried out to evaluate the ability of the model to capture the jamming effect in collective motions and the transition towards flocking under some specific assumptions. We end the paper with conclusions and outlook in Section~\ref{sec:conclusion}. \reviewerB{Finally, Appendix~\ref{appendix:comput_time} provides details about the computational framework and Appendix~\ref{app:videos} describes the videos contained in the supplementary material.}

\section{Description and analysis of the \reviewerA{model}}
\label{sec:model_analysis}

We first present in this section the derivation of the model we designed to describe the collective cell dynamics and the transitions from jamming to flocking under the influence of different parameters.The construction is guided by the idea of a ``minimal'' set of ingredients that are necessary for the description of the key underlying mechanisms at play in the system. The structure stems from a novel model we proposed and validated in~\cite{nature_phys}, guided by experiments on cellular rings. In the original version, cell dynamics was described by means of a Vicsek-type model in interaction with self-generated surrounding acto-myosin cables. However, as the focus of the present work is on cell-cell and cell-wall interactions, two major differences are implemented: (i) we introduce a soft local attraction-repulsion force derived from a potential to model elastic interactions and (ii) we do not take into account the impact of the supra-cellular cables on the cellular dynamics. Note that from a biological viewpoint, the description is typically focused on the collective movement of cells, but comparable rules could apply in the case of animal or human groups of agents.

More precisely, each cell is represented as a hard-sphere of constant radius $R_0>0$, which means that its deformation is neglected. We consider $N$ cells, moving in a fixed two-dimensional domain $\Omega \subset \R^2$. For each cell, the dynamics are characterized by the position $\mathbf{X}_{k}(t) \in \R^{2}$, the velocity $\mathbf{V}_{k}(t) \in \R^{2}$ and the polarity $\mathbf{P}_{k}(t) \in \mathbb{S}^{1}$, where $\mathbb{S}^{1}$ denotes the set of unit vectors $\mathbb{S}^{1} = \{\mathbf{d} \in \R^{2}, \|\mathbf{d}\| = 1\}$, $\|\cdot\|$ is the Euclidean norm and $k = 1, \ldots, N$.  We denote by $\mathbf{X}(t) = (\mathbf{X}_{k}(t))_{ k = 1,\ldots, N} \in (\R^{2})^{N}$ the vector of all positions, by $\mathbf{V}(t) = (\mathbf{V}_{k}(t))_{ k = 1,\ldots, N} \in (\R^{2})^{N}$ the vector of all velocities and by $\mathbf{P}(t) = (\mathbf{P}_{k}(t))_{ k = 1,\ldots, N} \in (\mathbb{S}^{1})^{N}$ the vector of all polarities, respectively. We refer to Fig.~\ref{fig:schematic} for a schematic of the geometrical configuration.

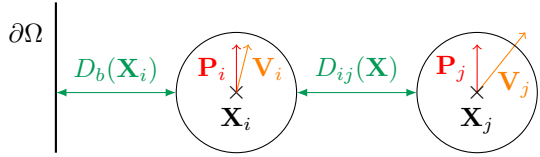
\begin{figure}
\begin{center}
    \begin{tikzpicture}[scale = 0.8]
    \draw (0,0) circle (1cm);
    \node at (0,0) {$\times$}; 
    \node[below] at (0,-0.1) {$\X_i$};  Label for the first circle

    node
    \draw[orange,->] (0,0) -- (0.2,0.8) node[midway, right, orange] {$\V_i$};
    \draw[orange,->] (4,0) -- (4.8,1) node[midway, below, orange] {$\quad \V_j$};
    
    \draw[red,->] (0,0) -- (0,0.8) node[midway, left, red] {$\Po_i$};
    \draw[red,->] (4,0) -- (4,0.8) node[midway, left, red] {$\Po_j$};
    
    \draw (4,0) circle (1cm);
    \node at (4,0) {$\times$}; 
    \node[below] at (4,-0.1) {$\X_j$}; 

    \draw[>=latex,<->,ForestGreen] (1,0) -- (3,0) node[midway,above] {\color{ForestGreen}$D_{ij}(\X)$};

    \draw[thick] (-3,-1) -- (-3,1.5) ;
    \node at (-3.5,1) {$\partial \Omega$};

    \draw[>=latex,<->,ForestGreen] (-3,0) -- (-1,0) node[midway,above] {\color{ForestGreen}$D_b(\X_i)$};
    \end{tikzpicture}
\end{center}
\caption{Schematic presenting the geometrical configuration. $(\X_i, \V_i, \Po_i)$: position, velocity and polarity of the $i$-th cell. $D_{ij}(\X)$: distance between the $i$-th and $j$-th cells. $D_b(\X_i)$: distance between the $i$-th cell and the boundary.}
\label{fig:schematic}
\end{figure}

\paragraph{Positions and velocities dynamics.} 

We start by writing the fundamental laws of dynamics, implying that the positions and velocities time evolution is given by the following equations:
\begin{align}
    &\displaystyle \frac{d \X}{dt} =  \V, \label{eq:pos}\\
     &\V =  \Proj_{\mathcal{C}_\X}(c\Po + \gamma \mathbf{F}(\X)). 
     \label{eq:vel}
\end{align}
Eq.~\eqref{eq:pos} simply translates that the velocity represents the time derivative of the position. In Eq.~\eqref{eq:vel} several ingredients are gathered. We first assume that cell velocity aligns to polarity and that $c > 0$ is the \reviewerA{desired} velocity amplitude. Next, following ~\cite{graner}, the impact of a soft attraction-repulsion force on the velocity is incorporated, by means of a force derived from a potential. Therefore, $\mathbf{F} = (\mathbf{F}_k)_{ k = 1,\ldots, N}$ is described by: 
\begin{equation}
    \mathbf{F}_k (\X) = \displaystyle \sum_{j, \norme{\X_k-\X_j}\leq \Rintar}  \nabla_{\X_k} W(\lVert \X_k - \X_j \rVert)
\end{equation}
for each $k = 1,\ldots, N$, where $W$ is an attraction-repulsion potential and $\Rintar> 0$ denotes the cell attraction-repulsion interaction radius. We here use the following polynomial potential~\cite{graner}: 
\begin{equation}
    W(r)  = -\kappa \, \left(\frac{r^2}{2}-\frac{r^3}{6R_{c}}\right),
\end{equation}
where $R_{c}> 0$ is the radius of a ``comfort zone'' that we introduce in the neighborhood of each cell and $\kappa >0 $ is the rigidity constant associated with the attraction-repulsion force. The parameter $\gamma >0$ represents the inverse friction (or mobility) coefficient that controls the amplitude of the velocity response to forces~\cite{graner}. \reviewerA{We note that, although the model actually depends only on the multiplicative constant $\gamma\kappa$, retaining the two parameters allows the two underlying physical phenomena to be interpreted separately.} Finally, to prevent cell overlapping and to properly characterize cell dynamics in the vicinity of the walls (which are the borders of the domain $\Omega$), we follow the so-called ``hard-repulsion'' approach, introduced in~\cite{collision_maury,MauryVenel2011}. The idea is to project the velocity \reviewerA{$(c\Po + \gamma \mathbf{F}(\X))$} that the particles would have without obstacles on a set of admissible velocities, denoted $\mathcal{C}_\X$, which prevents cells from overlapping and escaping the domain:
\begin{align}
\mathcal{C}_\X = \{ \V \in \mathbb{R}^{2N} | \;\forall i<j, \; D_{i,j}(\X)=0 &\implies \nabla D_{i,j}(\X)\cdot\V \geq 0, \nonumber \\ 
\;\forall i, \; D_{b}(\X_i)=0 &\implies \nabla D_{b}(\X_i)\cdot\V_i \geq 0\}.\label{eq:CX}
\end{align}
We here denote by $D_{i,j}(\X)  = \norme{\X_i-\X_j} - 2R_0$ the distance between the $i$-th and $j$-th
particles of radius $R_0$ and by $D_b(\X_i) =  \inf_{\mathbf{y} \in \partial \Omega}\norme{\mathbf{y}-\X_i} - R_0$ the distance between the $i$-th cell and the boundary (see Fig.~\ref{fig:schematic}). The notation $\Proj_{\mathcal{C}_\X}$ stands for the projection operator on the set of admissible velocities, that incorporates nonlocal contributions from the neighbors on the dynamics of each cell.

\paragraph{Polarities dynamics.}
We consider here that the polarity of each cell is defined by the direction of its lamellipodia and it reflects the anisotropic distribution of its cytoskeleton. Again, several mechanisms are incorporated in the description of the time evolution of the polarity, as follows:
\begin{equation}
    d\Po_k =\Proj_{\Po_k^\perp} \circ \left( \mu\, (\overline{\Po}_k -\Po_k)\, dt + \delta\, \left(\frac{\V_k}{\norme{\V_k}}-\Po_k\right) dt + \sqrt{2D}\, (d\mathbf{B}_t)_k \right),
    \label{eq:pol}
\end{equation}
for each $k= 1, \ldots, N$. First, in the spirit of a Vicsek-like modeling approach, see for instance~\cite{hakim2017collective} for a review of such developments in the context of cell dynamics modeling, we assume that cells align their polarities with their neighbors. We therefore introduce the following local average polarity:
\begin{equation}
    \overline{\Po}_k = \displaystyle \frac{ \displaystyle \sum_{j, \norme{\X_j - \X_k}\leq \Rintpo}\Po_j}{ \norme{\displaystyle \sum_{j, \norme{\X_j - \X_k}\leq \Rintpo }\Po_j}},
\end{equation}
\reviewerA{where $\Rintpo > 0$ denotes the radius of the polarity interaction zone for each cell.} We assume that the polarity of the $k$-th cell relaxes to the \reviewerA{local average polarity $\overline{\Po}_k$} of the neighboring particles with an angular rate $\mu > 0$.  Next, we incorporate the relaxation of the polarity to the velocity unit direction $\V_k /\norme{\V_k}$, at angular rate $\delta > 0$. The last contribution in Eq.~\eqref{eq:pol} corresponds to a Gaussian white random noise with an angular diffusion coefficient $D > 0$. Finally, to ensure that the polarity of each cell remains of norm 1 during time, a projection step of the possible infinitesimal variation is performed onto the orthogonal space to the vector $\Po_k$, by means of the projection operator $\Proj_{\Po_k^\perp}$. The symbol $\circ$ corresponds to the fact that Eq.~\eqref{eq:pol} is to be understood in the Stratonovich sense \cite{evans2012introduction}.

\paragraph{Well-posedness.} We next study the well-posedness of the problem described by Eqs.~\eqref{eq:pos}-\eqref{eq:vel}-\eqref{eq:pol}, without the contribution of the stochastic part ($D=0$), following the method proposed in \cite{MauryVenel2011}. The novelty lies in the fact that in the present work we account for the polarity dynamics and consequently, the resulting system can be seen as an extension of the previously studied problem, where only position and velocity equations were coupled.

We first rewrite the polarity equation using polar coordinates. Introducing the polarity angles $\theta_k \in [0,2\pi)$, such that $\Po_k = (\cos(\theta_k), \sin(\theta_k))^T$ (where $(\cdot)^T$ denotes the transposition operator), Eq.~\eqref{eq:pol} rewrites as follows:
\begin{equation*}
    \frac{d \theta_k}{dt} \left( \begin{array}{c}
             - \sin(\theta_k) \\
             \cos(\theta_k)
        \end{array} \right)  
         = \left[\left( \begin{array}{c}
             - \sin(\theta_k) \\
             \cos(\theta_k)
        \end{array} \right) \cdot \left(\mu \overline{\Po}_k +\delta \frac{\V_k}{\norme{\V_k}}\right)\right] \left( \begin{array}{c}
             - \sin(\theta_k) \\
             \cos(\theta_k)
        \end{array} \right).
 \end{equation*}
Then, denoting by $\bar\theta_k$ and $\psi_k$ the angles of the vectors $\overline{\Po}_k$ and $\V_k/\norme{\V_k}$, after taking the scalar product with $\Po_k^\perp = ( -\sin(\theta_k), \cos(\theta_k))^T$, the previous equation simply writes: 
\begin{equation*}
 \frac{d \theta_k}{dt} 
          =  \mu \sin(\bar\theta_k - \theta_k) + \delta \sin(\psi_k - \theta_k).
\end{equation*}
Therefore, the whole system can be recast as:
\begin{equation}
\label{eq:proj_complet}
\frac{d}{dt} ( \X, \theta) = \Proj_{\mathcal{C}_\X \times \mathbb{R}^N} \U( \X, \theta),
\end{equation}
where
\begin{equation}
\label{eq:U}
\U(\X,\theta)_j = \begin{cases}
 c\, (\cos{\theta_j}, \sin{\theta_j})^T + \gamma \mathbf{F}_j(\X), & \text{if } 1 \leq j \leq N,\\
    \mu\, \sin(\bar\theta_{j-N} - \theta_{j-N}) + \delta \, \sin(\psi_{j-N} - \theta_{j-N}), & \text{if } N+1 \leq j \leq 2N.
    \end{cases}
\end{equation}
We note that $\psi_k$ is actually a function of $\X$ and $\theta$. 

To study this equation, we transform it into a weak differential inclusion. Denoting by $\mathcal{N}_\X$ the polar cone\footnote{A polar cone $C^\circ$ of a convex cone $C$ in $\mathbb{R}^n$ is defined as $C^\circ = \{ \mathbf{y} \in \mathbb{R}^n \mid \mathbf{x} \cdot \mathbf{y} \leq 0 \ \text{for all} \ \mathbf{x} \in C\}$.} of the non-empty convex set $C_{\X}$, it can be easily checked that the polar cone of $C_{\X} \times \mathbb{R}^N$ is $\mathcal{N}_{\X} \times \{0\}$.  Therefore, we have the following identity:
\begin{equation*}
    \Proj_{\mathcal{C}_\X \times \mathbb{R}^N} + \Proj_{\mathcal{N}_\X \times \{0\}} = \text{Id},
\end{equation*}
and thus Eq.~\eqref{eq:proj_complet} can also be written as:
\begin{equation*}
    \frac{d}{dt} ( \X, \theta) = \mathbf{U}(\X, \theta) -\Proj_{\mathcal{N}_\X \times \{0\}} \mathbf{U}(\X, \theta).
\end{equation*}
In this formulation, the last term can be interpreted as the contact forces exerted on the cells. In particular, the solution to the differential equation satisfies the following differential inclusion:
\begin{equation}
        \displaystyle \frac{d(\X,\theta)}{dt}  + \mathcal{N}_\X \times \{0\}  \ni  \mathbf{U}(\X, \theta).
    \label{rafle}
\end{equation}
 The existence and uniqueness of a solution to this equation \reviewerA{and equivalently to} Eq. \eqref{eq:proj_complet} are stated in the following proposition, \reviewerA{which is an application of Theorem 2.10 in \cite{MauryVenel2011}. We also refer to \cite{these_venel} for additional details.}

\begin{prop}
\label{prop:exist}
Let \( Q = \{ \X \in \mathbb{R}^{2N} \mid \forall i < j, \, D_{ij}(\X) \geq 0 \}\) be the set of admissible configurations and assume that \( \mathbf{U} \) is Lipschitz and bounded. Then, for any initial data \( (\X_0, \theta_0) \in Q \times \mathbb{R}^N \) and any time \( T > 0 \), there exists a unique absolutely continuous solution\footnote{A function $f:[0,T]\to\R^N$ is said absolutely continuous if each component is in $L^1(0,T)$ and has a weak derivative in $L^1(0,T)$. The set of absolutely continuous solution of the interval $(0,T)$ is also denoted $(W^{1,1}(0,T))^N$.} on the interval $(0,T)$ to the system
\begin{equation}
\begin{cases}
\displaystyle \frac{d(\X,\theta)}{dt}  + \mathcal{N}_\X \times \{0\}  \ni \mathbf{U}(\X, \theta), \\
(\X, \theta)(0) = (\X_0, \theta_0).
\end{cases}
\end{equation}
\reviewerA{Moreover, this is also the unique absolutely continuous solution to Eq. \eqref{eq:proj_complet}.}
\end{prop}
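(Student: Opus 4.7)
The plan is to reduce the system to Theorem 2.10 of \cite{MauryVenel2011} applied in the enlarged phase space $(\X,\theta)\in\R^{3N}$ with the Cartesian constraint set $\tilde Q := Q\times\R^N$. Three ingredients have to be checked: uniform prox-regularity of $\tilde Q$, identification of its normal cone with $\mathcal N_\X\times\{0\}$, and the Lipschitz/boundedness hypothesis on $\U$, so that the Maury--Venel existence and uniqueness theorem for sweeping-type differential inclusions applies directly.

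First I would recall that $Q$ has been shown in \cite{MauryVenel2011} to be $\eta$-prox-regular in $\R^{2N}$ for some $\eta>0$ depending on the radii $R_0$ and on $N$. Since $\R^N$ is trivially prox-regular (with constant normal cone $\{0\}$) and uniform prox-regularity is preserved under Cartesian products, $\tilde Q$ is $\eta$-prox-regular in $\R^{3N}$. The normal cone at $(\X,\theta)\in\tilde Q$ is then $\mathcal N_\X\times\{0\}$, exactly the cone appearing in \eqref{rafle}. The Lipschitz continuity and boundedness of $\U$ are assumed in the statement; this is the reason why a regularized version of the model is considered, since $\overline{\Po}_k$ and $\psi_k$ are only piecewise smooth in $(\X,\theta)$.

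Once these hypotheses are in place, Theorem 2.10 of \cite{MauryVenel2011} applies and delivers, for any $T>0$ and any initial datum $(\X_0,\theta_0)\in \tilde Q$, a unique absolutely continuous solution of the Cauchy problem for \eqref{rafle} on $(0,T)$. To obtain the equivalence with the projected formulation \eqref{eq:proj_complet}, I would use the Moreau decomposition $\mathrm{Id}=\Proj_{\mathcal C_\X\times\R^N}+\Proj_{\mathcal N_\X\times\{0\}}$ together with the fact that, for any absolutely continuous curve in $\tilde Q$, the derivative $\frac{d}{dt}(\X,\theta)$ lies almost everywhere in the tangent cone $\mathcal C_\X\times\R^N$. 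Projecting \eqref{rafle} onto this cone yields \eqref{eq:proj_complet}; conversely, any solution of \eqref{eq:proj_complet} satisfies $\U-\Proj_{\mathcal C_\X\times\R^N}\U=\Proj_{\mathcal N_\X\times\{0\}}\U\in\mathcal N_\X\times\{0\}$, hence solves \eqref{rafle}.

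The hard part has already been done in \cite{MauryVenel2011}: the prox-regularity of the hard-sphere constraint set $Q$ is a nontrivial geometric fact, and the existence proof itself relies on a catching-up algorithm and careful estimates in the prox-regular setting. Here, the only genuinely new input is that the polarity variable $\theta$ is unconstrained, so it contributes a trivial product factor to the admissible set. Consequently, the main task reduces to verifying that the Cartesian product preserves the geometric and analytic hypotheses of the cited theorem, after which the statement follows directly.
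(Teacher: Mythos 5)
Your proposal is correct and follows essentially the same route as the paper: pass to the product set $Q\times\R^N$, identify its proximal normal cone with $\mathcal{N}_\X\times\{0\}$, transfer the uniform prox-regularity of $Q$ to the product, and invoke Theorem 2.10 of Maury--Venel, with the equivalence to the projected equation handled via the Moreau decomposition. The only difference is one of detail: the paper explicitly verifies the two product facts you invoke as standard (the normal cone identity via the characterization $\Proj_{A\times B}=\Proj_A\times\Proj_B$, and prox-regularity via the exterior-ball condition for $Q\times\R^N$), and it cites Bernicot--Venel (Prop.~3.3) for the equivalence with Eq.~\eqref{eq:proj_complet} rather than arguing it directly.
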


\begin{proof} As stated in the proof of Theorem 2.30 (p.~40) in \cite{these_venel}, the existence and result follows from the fact that   $\mathcal{N}_\X \times \{0\}$ coincides with the normal proximal cone of $Q\times \R^N$ at point $(\X,\theta)$ and that $Q\times\R^N$ is uniformly prox-regular. \reviewerA{Moreover, these properties also imply the equivalence with the initial problem as stated in \cite{bernicot2010differential} (prop 3.3).} \reviewerAA{Let us check that these two statements are true.}

From \cite{these_venel} (Prop. 3.9 p. 48) , the polar cone $\mathcal{N}_{\X}$ actually coincides with the proximal cone of $Q$ at $\X$, which is denoted $N(Q, \X)$ and is defined by: 
\begin{equation}
     N(Q, \X) = \{\V \in \R^{2N}\ |\ \exists \alpha >0, \X \in \Proj_Q(\X + \alpha \V) \}.
\end{equation}
where $\Proj_Q(\mathbf{Y})$ denotes the set of all elements of $Q$ that realize the minimal distance between $\mathbf{Y}$ and $Q$. Adding the angle variable results in the following identities:
\begin{align*}
    N(Q \times \mathbb{R}^N, (\X, \theta)) &= \{(\V, \psi) \in \R^{3N}\ |\ \exists \alpha >0, (\X, \theta) \in \Proj_{Q \times \mathbb{R}^N}(\X+\alpha \V,\theta + \alpha \psi) \}\\
    &= \{(\V, 0) \in \R^{3N}\ |\ \exists \alpha >0, \X \in \Proj_{Q}(\X+\alpha \V) \}\\
    &=  N(Q, \X) \times \{0\},
\end{align*}
since, for any vectors $\mathbf{a}$, $\mathbf{b}$ and any sets $A$, $B$, we have $\Proj_{A\times B}(\mathbf{a},\textbf{b}) = \Proj_A(\mathbf{a})\times \Proj_B(\mathbf{b})$ and $\theta \in \Proj_{\mathbb{R}^N}(\theta + \alpha \psi) = \{\theta + \alpha \psi\}$ results into $\psi = 0$ as $\alpha$ is strictly positive. Consequently, the polar cone $\mathcal{N}_{\X}\times\{0\}$ coincides with the proximal cone of $(Q\times \R^N)$ at $(\X, \theta)$.

From \cite{these_venel} (Prop. 3.12 p. 51), the set $Q$ is prox-regular with a positive constant $\eta > 0$, meaning that 
\begin{equation}
    \mathcal{B}(\X + \eta \V /\| \V \|,\eta) \cap Q = \emptyset,\quad \forall \X \in \partial Q, \forall \V \in N(Q,\X)\backslash \{0\},
\end{equation}
where $\mathcal{B}(\mathbf{x}, r)$ denotes the ball with center $\mathbf{x}$ and radius $r>0$. This yields the uniform prox-regularity property for the set $Q\times\R^N$
\begin{equation}
   \mathcal{B}((\X + \eta \V /\| \V \|,\theta),\eta) \cap (Q\times\R^N) = \emptyset,\quad \forall (\X,\theta) \in \partial (Q \times \R^N), \forall (\V,0) \in N(Q \times \mathbb{R}^N, (\X, \theta)) \backslash \{0\}, 
\end{equation}
since $\mathcal{B}((\X + \eta \V /\| \V \|,\theta),\eta) \subset \mathcal{B}(\X + \eta \V /\| \V \|,\eta)\times \R^N$.
\end{proof}

\begin{remark}
The function $\U$, defined in Eq. \eqref{eq:U}, is bounded but not Lipschitz as both the attraction-repulsion force and the average polarity depend on a discontinuous cut-off function and a projection on the circle respectively. However, regularized interactions can be added to smooth out these functions and therefore to apply Prop.~\ref{prop:exist} in order to conclude on \reviewerA{the well-posedness of the regularized version} of the coupled system ~\eqref{eq:pos}-\eqref{eq:vel}-\eqref{eq:pol}. 
\end{remark}

\section{Numerical method}
\label{sec:numericalmethod}
 
In this section, we present the numerical method implemented to solve Eqs.~\eqref{eq:pos}-\eqref{eq:vel}-\eqref{eq:pol}. Let $\Delta t >0$ be the time step and denote by $(\X^n_k)$, $(\V^n_k)$ and $(\Po^n_k)$ the approximate positions, velocities and polarities at time $t^n=n \Delta t$, $n\in \reviewerA{\mathbb{N}}$, respectively. The method combines a semi-implicit approach introduced in \cite{numerical_polarity} for the polarity dynamics~\eqref{eq:pol} and the method proposed in \cite{MauryVenel2011} for the discretization of the equations coupling positions and velocities~\eqref{eq:pos}-\eqref{eq:vel}.

\paragraph{Discretization of the positions and velocities dynamics.} We consider the following semi-implicit time-stepping algorithm:
\begin{align} 
\X^{n+1} &=   \X^n + \Delta t\,\V^{n+1},\label{eq:pos_update}\\
 \V^{n+1} &= \Proj_{\mathcal{C}_{\X^n}^{\Delta t}}(c\Po^{n+1} + \gamma \mathbf{F}(\X^n)),
\label{eq:XV_disc}
\end{align}
where $\mathcal{C}_{\X^n}^{\Delta t}$ is a first-order approximation of the admissible velocity set $\mathcal{C}_{\X^n}$ described as follows:
\begin{align*}
\mathcal{C}_{\X^n}^{\Delta t} = \{ \V \in (\mathbb{R}^2)^N \; | \; \forall i<j, &\; D_{i,j}(\X^n) + \Delta t\, \nabla D_{i,j}(\X^n)\cdot\V \geq 0,  \\
 \forall i, \; &\ D_{b}(\X^n_i) + \Delta t\, \nabla D_{b}(\X_i^n)\cdot\V_i \geq 0\}.
\end{align*}
Introducing the vectors  $\D =(D_{i,j}(\X^n))_{i<j} \in \mathbb{R}^{\frac{N(N-1)}{2}}$, $\D_{b} =(D_{b}(\X_i^n))_{i} \in \mathbb{R}^{N}$, the matrices $B$  of size $(N(N-1)/2, 2N)$ and $B_b$  of size $(N, 2N)$ such that 
\begin{equation*} B \V = ( -\Delta t\ \nabla D_{i,j}(\X^n)\cdot\V)_{i<j} \in \mathbb{R}^{N(N-1)/2},\quad 
B_b \V = ( -\Delta t\ \nabla D_{b}(\X_i^n)\cdot\V)_{i}\in \mathbb{R}^{N}  \quad \forall\ \V \in (\mathbb{R}^2)^N,
\end{equation*} and denoting by 
\begin{equation*}\widetilde{\D} = \begin{bmatrix} \D\\ \D_b \end{bmatrix} \in \R^{\frac{N(N-1)}{2}+N},\quad 
\widetilde{B} = \begin{bmatrix} B\\ B_b \end{bmatrix} \in {\cal M}_{\frac{N(N-1)}{2}+N,2N}\ ,
\end{equation*}
the admissible velocity set can be rewritten as:
\begin{align}
\label{eq:constr}
\mathcal{C}_{\X^n}^{\Delta t} =  \{ \V \in (\mathbb{R}^2)^N \; |  & \; \widetilde{B} \V - \widetilde {\D} \leq 0 \},     
\end{align}
where the constraint should be understood component-wise.
Following the approach introduced in~\cite{collision_maury}, we reformulate the projection step in Eq.~\eqref{eq:XV_disc} as an optimization problem:
\begin{equation}
    \V^{n+1} = \underset{\V \in \mathcal{C}_{\X^n}^{\Delta t}}{\text{argmin}} \frac{1}{2}\norme{\V - c \Po^{n+1} - \gamma \mathbf{F}(\X^n)}^2,
    \end{equation}
and introduce the associated Lagrangian functional, namely:
\begin{equation}
    \mathcal{L}(\V, \blambda) = \frac{1}{2} \norme{\V - c \Po^{n+1} - \gamma \mathbf{F}(\X^n)}^2 + \blambda \cdot \left( \widetilde{B}\V-\widetilde{\D}\right),
\end{equation}
where $\blambda \in \R^{\frac{N(N-1)}{2}+N}$ is the Lagrange multiplier associated with the constraints defined in Eq.~\eqref{eq:constr}. At each time step, we solve the minimization problem using the Uzawa algorithm~\cite{Ciarlet2007}, which consists in approximating a solution $\left( \V^{n+1},\blambda \right)$ by constructing the sequences $(\V^{(j)})_j$ and $(\blambda^{(j)})_j$ as follows:
\begin{align}
1.\quad &\V^{(0)} = \V ^n, \blambda^{(0)} = \mathbf{0},\label{eq:uzawa1}\\
2.\quad &\V^{(j+1)} = \displaystyle \min_{\V \in \mathbb{R}^{2N}} \mathcal{L}(\V, \blambda^{(j)}) = c\Po^{n+1} + \gamma \mathbf{F}(\X^n) - \widetilde{B}^T \blambda^{(j)},\label{eq:uzawa2} \\
3.\quad &\blambda^{(j+1)} = \max \left ( \mathbf{0}, \blambda^{(j)} + h\left ( \widetilde{B}\V^{(j)}-\widetilde{\D}\right ) \right ),\label{eq:uzawa3}
\end{align}
 where $h >0$ is the gradient-descent step of the method and max is the component-wise maximum function. Let us mention that an alternative method to solve \eqref{eq:pos}-\eqref{eq:vel} would be the Arrow-Hurwitz algorithm, which is not based on the linearization of the constraints \cite{Degond_2017,DegondSIADS}.

\begin{remark} [Convergence analysis of the Uzawa algorithm]
According to \cite[Th.~9.4-1]{Ciarlet2007}, the Uzawa algorithm converges to the solution of the minimization problem if the gradient descent step $h$ satisfies $0 < h < \reviewerA{h_{\max} =}\ 2 \alpha/|\!|\widetilde{B}|\!|^2$. Moreover, a rough lower bound of $h _{\max}$ was derived in~\cite{collision_maury}:
$$h_{\max} \geq \frac{1}{2 N_{\max} \Delta t ^2 \sqrt{d}},$$ where $N_{\max}$ is the maximal number of contacts, which in dimension $d=2$ and in the case of constant radius for all particles is equal to $6$. We therefore take $h$ equal to $\left ( 12 \sqrt{2} \Delta t ^2 \right ) ^{-1}$ to ensure convergence of the algorithm under the stopping criterion on the relative distance between two iterative values of the velocity field below a threshold value fixed at $10^{-2}$.
\end{remark}

\paragraph{Discretization of the polarities dynamics.} 
Recalling that the norm of the polarity vector is equal to 1 during time, a  suitable discretization method that preserves this property at the discrete level has to be implemented. We therefore consider the following discretization of the polarity equation \eqref{eq:pol}, initially proposed in~\cite{numerical_polarity}:
\begin{equation*}
    \Po_k^{n+1} = \Po_k^n + \left(\text{Id} - \Po_k^{n+1/2}\otimes \Po_k^{n+1/2}\right) \left( \mu\,\Delta t\, \left(\overline{\Po}_k^n-\Po_k^n\right)  + \delta\,\Delta t \, \left(\frac{\V_k^n}{\norme{\V_k^n}} - \Po_k^n\right)+ \sqrt{2D \Delta t}\, \boldsymbol{\xi}_k^n \right),
\end{equation*}
where $\Po_k^{n+1/2} = (\Po_k^n + \Po_k^{n+1})/2$, the symbol $\otimes$
denotes the tensor product of two vectors $\mathbf{a}$, $\mathbf{b}$
defined by $\mathbf{a} \otimes \mathbf{b} = \mathbf{a} \mathbf{b}^T$
and $\boldsymbol{\xi}_k^n$ is a random number following a standard Normal distribution. In terms of polarity angles $\theta^n_k \in [0,2\pi)$, such that $\Po ^n _k = \left(\cos \left(\theta^n_k\right), \sin\left(\theta^n_k\right)\right)^T$, the semi-implicit scheme can be expressed under the following explicit formulation:
\begin{equation}
    \theta^{n+1}_k  =  \theta^n_k + 2 \left( \hat{Q}^n_k -\theta^n_k\right) + \sqrt{2D \Delta t}\, \boldsymbol{\xi}^n_k, 
    \label{eq:theta_disc}
\end{equation}
with $\Q_k^n$ defined by:
\begin{equation*}
    \Q^n_k = \Po_k^n +\frac{ \Delta t }{2} \left(\mu\,\left(\overline{\Po}_k^n -\Po_k^n \right) + \delta \,\left(\frac{\V_k^n}{\norme{\V_k^n}} - \Po_k^n\right) \right).
\end{equation*}
and $\hat{\Q}^n_k$ is the polar angle of the vector $\Q^n_k$.\\

To sum up, the numerical method consists at each time step in a three-stage approach: first, compute the polarity vector $\Po^{n+1}$ of unit norm knowing $\Po^n$ and $\V^n$ using \eqref{eq:theta_disc}, then update the velocity vector $\V^{n+1}$ by means of the Uzawa algorithm \eqref{eq:uzawa1}-\eqref{eq:uzawa2}-\eqref{eq:uzawa3} and on the basis of $\Po^{n+1}$ and $\mathbf{F}(\X^n)$, and finally compute the new position vector $\X^{n+1}$ using $\X^n$ and $\V^{n+1}$ using \eqref{eq:pos_update}.

\section{Numerical experiments and discussion}
\label{sec:numericalresults}

In this section, different test-cases are performed to show that the model is able to capture flocking phase transitions and jamming effects in the context of cell collective dynamics.  The reference parameters of the model are listed in Table \ref{tab:parameter}. They are all extracted from \cite{nature_phys}, except the attraction-repulsion interaction radius $\Rintar$, the cell comfort radius $R_c$ and the rigidity constant $\kappa$, which are specific to Model \eqref{eq:pos}-\eqref{eq:vel}-\eqref{eq:pol}. Note that, in this set of parameters, the comfort distance $R_c$ has been taken equal to half the attraction-repulsion radius $\Rintar$. Consequently, only the soft repulsion part of the force is activated. \reviewerA{Note also that the relaxation parameters, $\mu$ and $\delta$, are taken equal as they correspond to the same underlying biological process, that is the reorganization of the cytoskeleton of the cells. We will still conduct a detailed study of their relative impact on the dynamics in Section~4.2.}

\reviewerB{The numerical method is implemented in an in-house Python computational framework, whose performances in terms of computing time are presented in Appendix~\ref{appendix:comput_time}.}

\begin{table}[H]
    \rowcolors{1}{gray!05}{white}
    \centering
    \begin{tabular}{llll}
    \toprule
    \textbf{Cell radius} &  $R_0$ & $7.5$ &$\microm$\\
    \textbf{Cell comfort radius} &  $R_{c}$ & $9.5$ &$\microm$\\
    \textbf{Cell attraction-repulsion interaction radius}  &  $\Rintar$ & $19$ &$\microm$\\
    \textbf{Cell polarity interaction radius} &  $\Rintpo$ & $60$ &$\microm$\\
    \textbf{Cell speed} & $c$ & $21.6$ &$ \microm \, \h^{-1}$ \\
    \textbf{Angular diffusion} & $D$ & $0.96$ &$\rad^2\, \h^{-1}$ \\
    \textbf{Relaxation parameter: polarity to mean polarity} & $\mu$ & $6.2$ &$\rad\, \h^{-1}$ \\
    \textbf{Relaxation parameter: polarity to velocity} & $\delta$ & $6.2$ &$\rad\, \h^{-1}$ \\
    \textbf{Rigidity constant} & $\kappa$ & $10^4$ &$\pN\, \microm^{-1}$ \\
    \textbf{Inverse friction coefficient} & $\gamma$ & $10^{-5}$ &$\pN^{-1} \h^{-1} \microm$ \\
    \bottomrule
    \end{tabular}
    \caption{Parameters of the model.}
    \label{tab:parameter}
\end{table}

\subsection{Order-disorder phase transition in a periodic domain}

First, we study the influence of the angular diffusion parameter $D$ on the global alignment of the polarities. In order to assess the impact of this factor in quantitative terms, we introduce the polarity order parameter $\phi$ defined as:
\begin{equation*}
    \phi(t) = \norme{\frac{1}{N}\sum^N_{k=1}\Po_k(t)}.
\end{equation*}
This quantity equals $1$ when all the polarities are aligned and is close to $0$ when they are randomly distributed.

\reviewerA{For this study, we consider periodic boundary conditions: the effect of boundaries is thus ignored.} Specifically, cells move in a square domain of size $L = 200\ \microm$ with periodic boundary conditions and the constraints due to the boundary conditions are removed from the set of the admissible velocities $\mathcal{C}_\X$ defined in Eq.~\eqref{eq:CX}. The cell dynamics are computed up to time $T= 20\,\h$  and the order parameter is averaged over the last $ T/8 = 2.5\, \h$ of the simulations:
\begin{equation}
     \frac{1}{\sharp \{t^n \in [7T/8,T]\}}\sum_{t^n \in [7T/8,T]} \phi(t^n),
    \label{eq:phip}
\end{equation}
where $\sharp A$ denotes the cardinal of a set $A$.

In Fig.~\ref{fig:phi_p}, we show the variation of the order parameter when the diffusion parameter $D$ is increased for different cell densities  \reviewerA{and different time steps}. In the left panel, simulations are performed with $N=160$ cells and in the right panel with $N=190$ cells, resulting in densities $\rho = 0.707$ and $\rho = 0.839$, respectively. Note that the maximal packing density, $\pi/(2\sqrt{3}) \approx 0.907$, is obtained for the hexagonal lattice configuration. For each diffusion parameter $D$, we have performed $20$ different numerical simulations, in order to properly account for the stochastic aspects of the dynamics. Without diffusion ($D= 0$), the order parameter equals $1$ and polarities are fully aligned, as expected in a case without noise. Then, as the diffusion parameter increases, the order parameter decreases up to reaching low values of about $0.1$, revealing the appearance of a phase transition between order and disorder, similar to results reported for a standard Vicsek-like model in \cite{degond2013macroscopic}. As already identified in \cite{vicsek1995novel}, the smoothness of the transition is due to the relatively low number of cells . We observe that the transition takes place for smaller diffusion parameters when density is increased: contact forces can be seen as an additional source of randomness. We note that the variability of the order parameter is larger for intermediate diffusion values, for instance when $D$ is between $5$ and $9\,\rad^2\h^{-1}$, see Fig.~\ref{fig:phi_p1}.  

\reviewerA{In addition, we report in Fig.~\ref{fig:phi_p} the results obtained for different time steps. We observe the convergence of the phase transition curves as $\Delta t$ decreases, with a slightly more significant variability in the intermediate diffusion coefficient range, where randomness still impacts the results. Moreover, in Fig.~\ref{fig:phi_p2}, results are only displayed for $\Delta t = 0.01\,\h$ and $0.001\,\h$, since the value $\Delta t = 0.1\,\h$ leads to numerical errors. Indeed, the time step has to be sufficiently small to capture the diffusion of the polarity angle. In the following, we therefore choose the time step equal to $10^{-2}\,\h$, which leads to changes in polarity angle of standard deviation of $\sqrt{2D \Delta t} \approx 0.13$ rad at each iteration when $D=0.96\ \rad^2 \,\h^{-1} $.}

 \begin{figure}[H]
    \centering
    \begin{subfigure}{0.49\textwidth}
    \includegraphics[width=\textwidth]{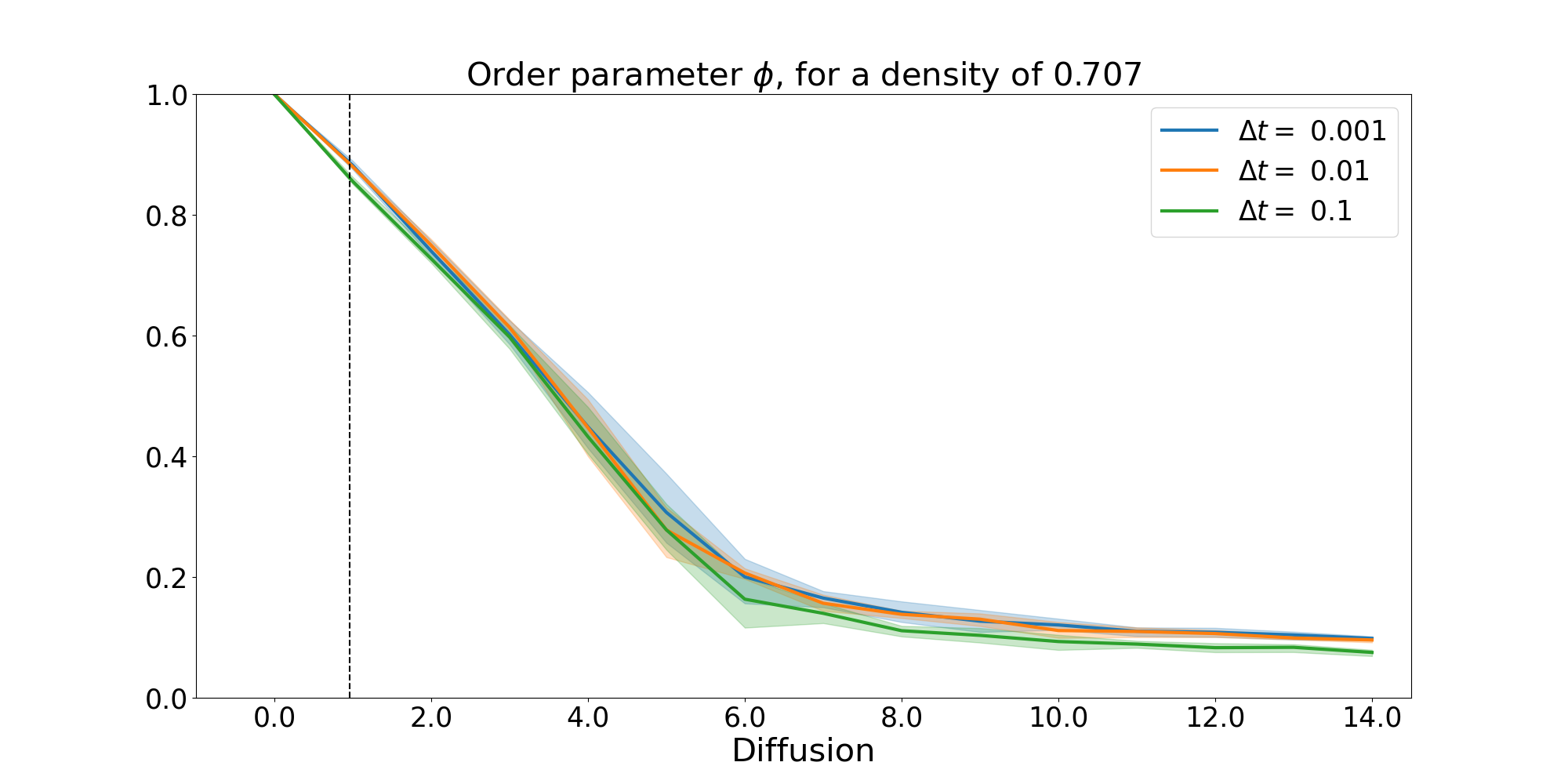}
    \caption{$\rho = 0.707$}
    \label{fig:phi_p1}
    \end{subfigure}
    \begin{subfigure}{0.49\textwidth}
    \includegraphics[width=\textwidth]{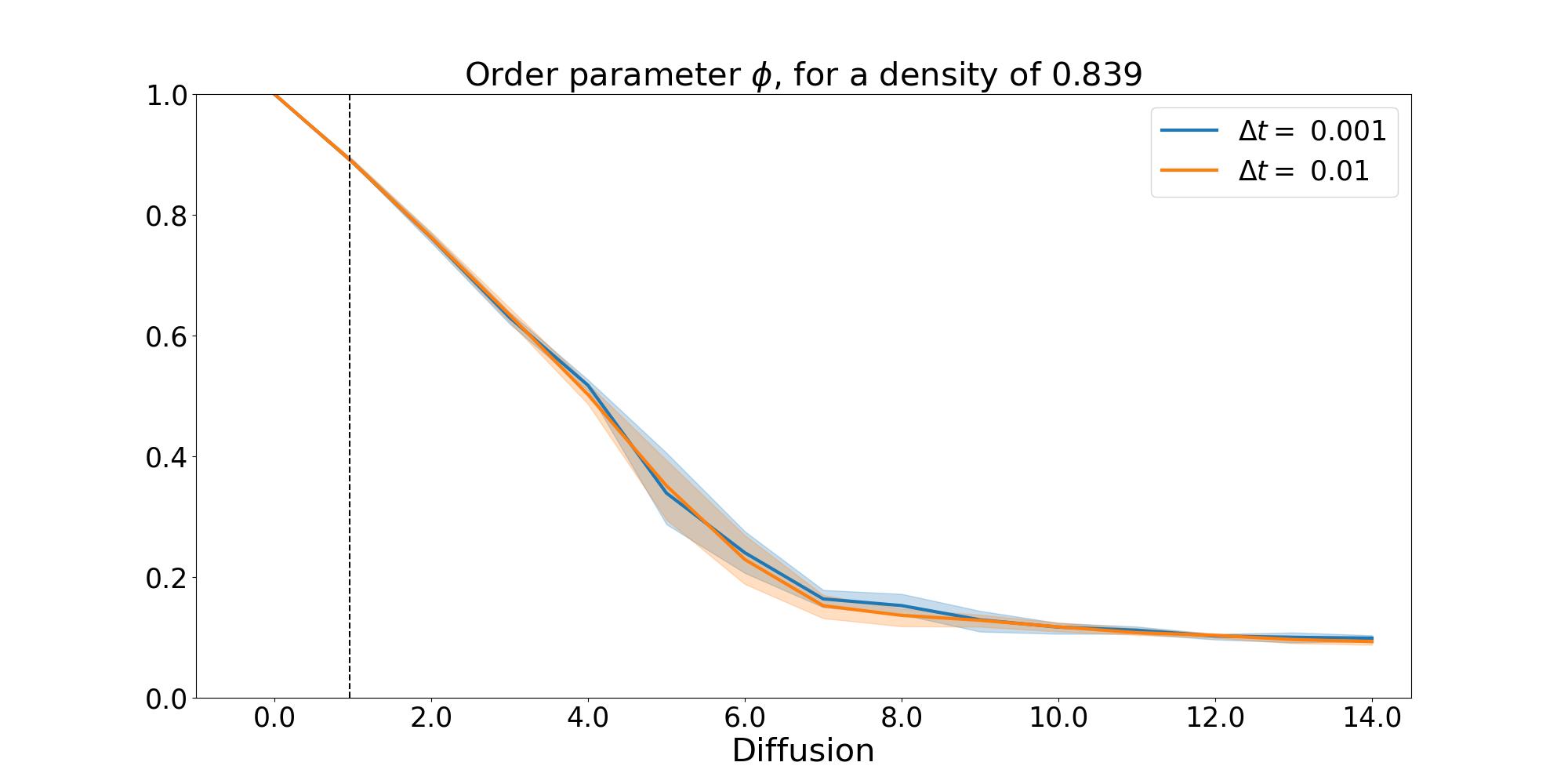}
    \caption{$\rho = 0.839$}
    \label{fig:phi_p2}
    \end{subfigure}
    \caption{(Periodic boundary condition) Order parameter as a function of the diffusion for two different density values, $\rho = 0.707$ (left panel) and $\rho = 0.839$ (right panel). \reviewerA{Mean (solid lines) and
    interquartile interval over $20$ simulations with $T = 20\,h$ and different time steps.} The vertical line at $D =0.96\ \rad^2 \,\h^{-1}$ corresponds to the reference diffusion coefficient used in the other test-cases. 
 }
    \label{fig:phi_p}
\end{figure}

\subsection{Rotation dynamics in a disk} 

In this part we investigate the role of the boundary conditions and the ability of the model to reproduce either a jammed configuration, or a rotational collective movement, depending on the ingredients incorporated in the polarity dynamics. To this end, we consider $N=160$ cells in a disk domain of radius $R = 200/\sqrt{\pi} \,\microm\approx 113\,\microm$. When the velocity feedback on the polarity is deactivated ($\delta = 0$), cells are blocked as depicted in Fig.~\ref{fig:delta0_mu6.2} and the corresponding \Mov{1}. Instead, when the feedback is activated, cells have a collective rotational motion as observed in Fig.~\ref{fig:delta6.2_mu6.2} and \Mov{3}. In order to derive a more accurate assessment of the collective movement, we introduce the three following quantities: the normalized global mean speed, defined as:
\begin{equation}
    \bar{v}(t) = \frac{1}{c}\frac{1}{N}\sum_{k=1}^N \left\|\V_k(t) \right\|,
    \label{eq:barv}
\end{equation}
\reviewerB{where we recall that $c$ is the desired speed of each cell} and the polarity and \reviewerAA{velocity} rotation order parameters:
\begin{equation}
    \phirot(t) = \frac{1}{N}\sum^N_{k=1} \Po_k(t) \cdot \e_k(t),\quad \reviewerAA{\phirot^v(t) = \frac{1}{N}\sum^N_{k=1} \frac{\V_k(t)}{\left\|\V_k(t) \right\|} \cdot \e_k(t),}
    \label{eq:phirot}
\end{equation}
where $\e_{k} = (\X_k - \X_c)^\perp/\| \X_k - \X_c\|$ is the unit tangential vector with respect to the disk center $\X_c$. If $\phirot$ \reviewerAA{(resp. $\phirot^v$)} is close to $1$, the polarity \reviewerAA{(resp. velocity)} vector field is close to a vortex configuration. 

We display in Fig.~\ref{fig:simu} the temporal evolution of the order parameter $\phi$, the normalized mean speed $\bar{v}$ and the rotation order parameters $\phirot$ \reviewerAA{and $\phirot^v$} over the simulations. When the velocity feedback is deactivated ($\delta =0$), see  Fig.~\ref{fig:delta0_mu6.2}, we observe a decrease in mean speed. The polarity rotation order parameter is low and the order parameter reaches values close to $1$\reviewerAA{, while the velocity rotation order parameter oscillates a lot.} These results indicate that cells are less and less moving, do not exhibit a rotational motion but polarities are strongly aligned. On the contrary,  when the velocity feedback is activated ($\delta > 0$), see Fig.~\ref{fig:delta6.2_mu6.2}, the mean speed reaches a value close to $1$, \reviewerAA{the two rotation order parameters now have similar values and still oscillate}, but are closer to $1$. These two indicators confirm the collective rotational motion of the cells. Moreover, the polarity order parameter is below $0.3$, which indicates that polarities are not aligned, which is compatible with a rotational movement. To sum up, these results suggest that when an effective boundary is added, the relaxation of the polarities in the velocity directions plays a crucial role in triggering collective motion. 

To further analyze how the parameters $\delta$ and $\mu$ impact the dynamics,  we gradually vary the parameters $\delta$ and $\mu$ between $0 \; \rad\, \h^{-1}$ and $6.2 \; \rad\, \h^{-1}$ and display the obtained phase diagram in Fig.~\ref{fig:phase_diagram}. Each point has been obtained by averaging the results over $20$ simulations.  In \reviewerAA{Figs.~\ref{fig:phase_diagram_phirot} and ~\ref{fig:phase_diagram_phirot_v}},  we observe that the rotation motion is mostly triggered by the polarity to velocity relaxation parameter $\delta$: indeed, the two rotation order parameters increase sharply as $\delta$ increases. This also induces a sharp increase in the mean speed (Fig.~\ref{fig:phase_diagram_meanspeed}) and a sharp decrease of the order parameter (Fig.~\ref{fig:phase_diagram_phip}). In contrast, the increase is more gradual as the polarity alignment parameter $\mu$ increases. However, the parameter $\mu$ still has an overall effect on the dynamics. To study this point, we compare two simulations with $\delta = 6.2 \; \rad\, \h^{-1}$ and two different values of $\mu$, namely $\mu = 0 \; \rad\, \h^{-1}$ and $\mu = 6.2 \; \rad\, \h^{-1}$.   Representative snapshots and the evolution of quantitative indexes over time are displayed in Figs~\ref{fig:delta6.2_mu0} and \ref{fig:delta6.2_mu6.2}: both simulations present a rotation motion but the absence of polarity alignments leads to more variation in amplitude for the rotation order parameter and the mean speed. For a more in-depth analysis, we also computed the polarity rotation order parameter locally in subdomains (see Fig.~\ref{fig:simu_stat_region}). We observe a high polarity rotation order parameter in the outer ring (touching the edge of the disk) in both cases, but higher for a non-vanishing $\mu$, as expected. In the inner ring, the difference is more pronounced: fluctuations are significantly larger for $\mu =0 \; \rad\, \h^{-1} $. Therefore, polarity alignments help maintain a full, solid rotation.

\begin{figure}
    
    \hfill\begin{subfigure}{0.48\textwidth}
    \centering
    \includegraphics[width= 0.45\textwidth]{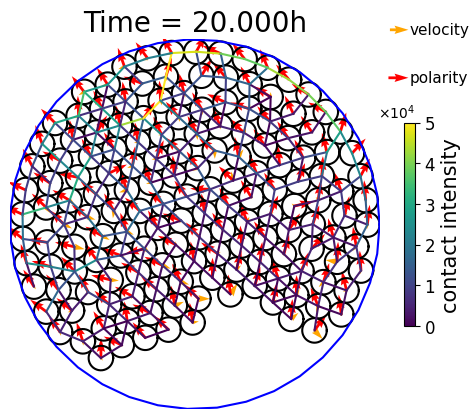}
    \includegraphics[width= 0.45\textwidth]{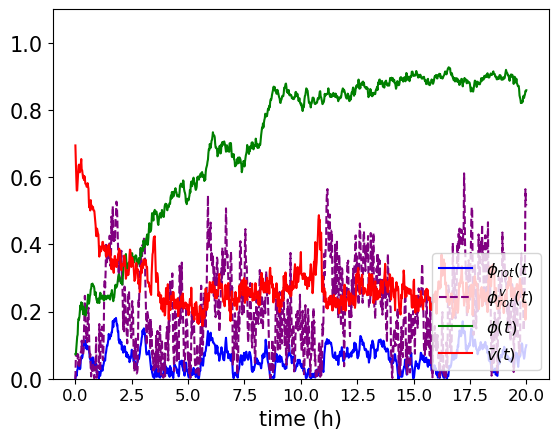}
    \subcaption{$\delta = 0 \; \rad\, \h^{-1}$, $\mu = 6.2 \; \rad\, \h^{-1}$}
    \label{fig:delta0_mu6.2}
    \end{subfigure}

    \begin{subfigure}{0.48\textwidth}
    \centering
    \includegraphics[width= 0.45\textwidth]{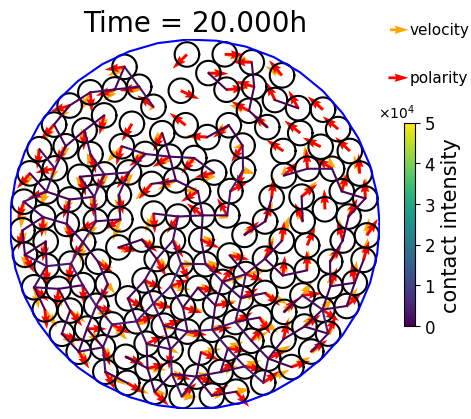}
    \includegraphics[width= 0.45\textwidth]{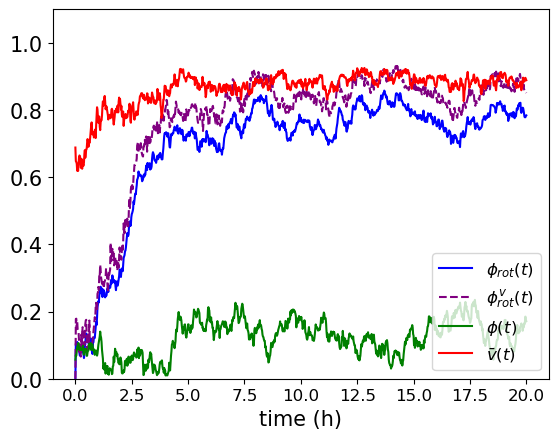}
    \subcaption{$\delta = 6.2 \; \rad\, \h^{-1}$, $\mu = 0 \; \rad\, \h^{-1}$}
    \label{fig:delta6.2_mu0}
    \end{subfigure}
    \hfill\begin{subfigure}{0.48\textwidth}
    \centering
    \includegraphics[width= 0.45\textwidth]{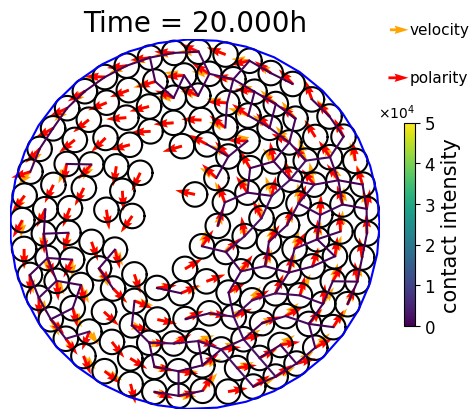}
    \includegraphics[width= 0.45\textwidth]{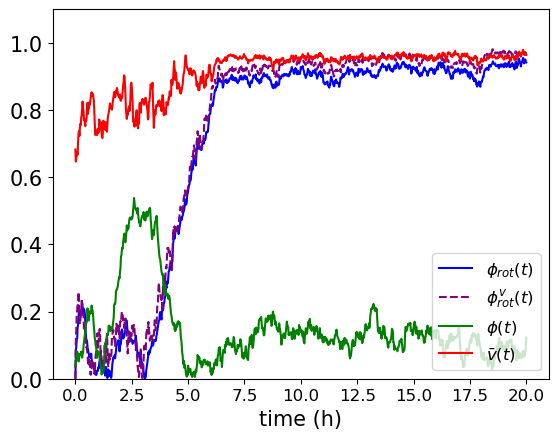}
    \subcaption{$\delta = 6.2 \; \rad\, \h^{-1}$, $\mu = 6.2 \; \rad\, \h^{-1}$}
    \label{fig:delta6.2_mu6.2}
    \end{subfigure}
    
    \caption{\reviewerA{(Disk domain) Cell configuration at final time $T=20\,\h$, with different values of $\delta$ and $\mu$. Parameters: $\Delta t = 10^{-2}\,\h$, $N= 160$, cell density $\rho = 0.707$. (Left panels) Polarities are represented by red vectors, velocities by orange vectors and the contact forces by edges between the cells where they are activated, colored according to the  magnitude of the contact force. (Right panels) Normalized global mean speed (in red), polarity rotation order parameter (in blue), \reviewerAA{velocity rotation order parameter (in purple),} and order parameter (in green) as functions of time. See also the corresponding \Mov{1}, \Mov{2} and \Mov{3}. }}
    \label{fig:simu}
\end{figure}

\begin{figure}
    \captionsetup[subfigure]{justification=centering}
    \centering
    \begin{subfigure}{0.32 \textwidth}
    \includegraphics[width = \textwidth]{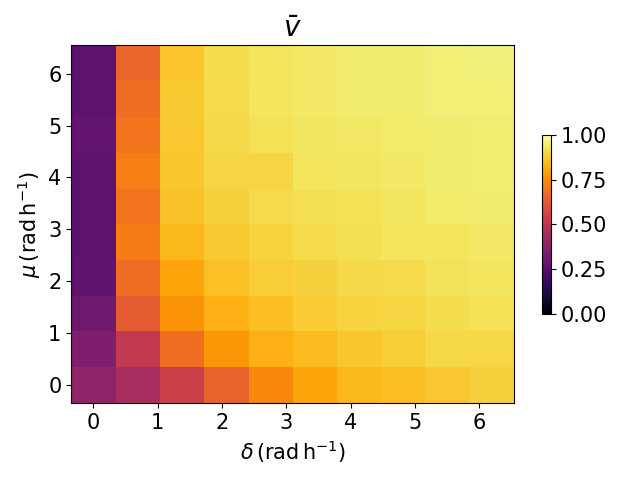}
    \subcaption{mean speed $\bar{v}$}
    \label{fig:phase_diagram_meanspeed}
    \end{subfigure}
    \begin{subfigure}{0.32 \textwidth}
    \includegraphics[width = \textwidth]{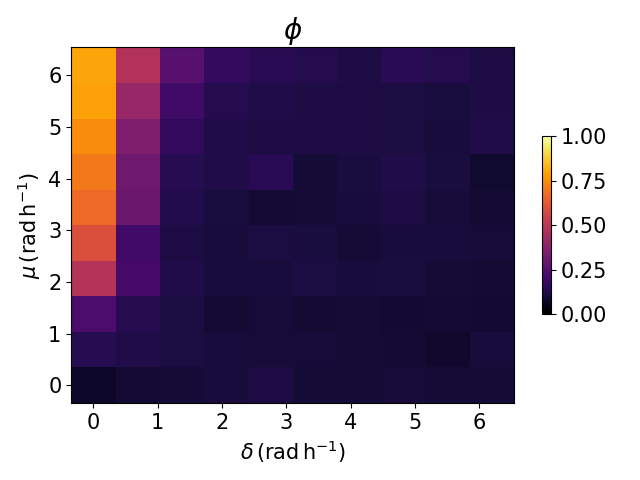}
    \subcaption{order parameter $\phi$}
    \label{fig:phase_diagram_phip}
    \end{subfigure}
    
    \begin{subfigure}{0.32 \textwidth}
    \centering
    \includegraphics[width = \textwidth]{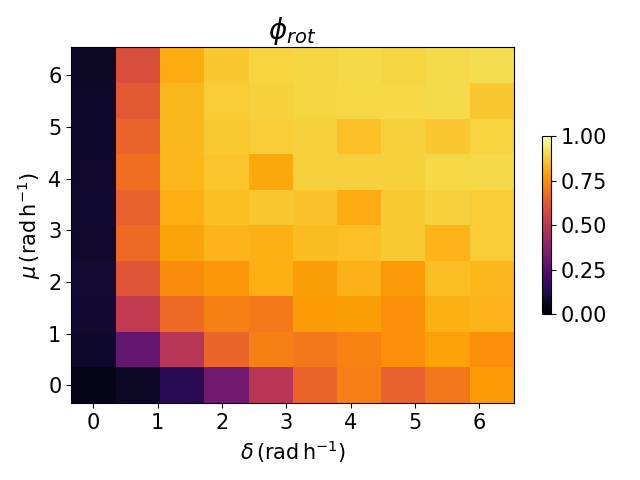}
    \subcaption{\reviewerAA{polarity} rotation\newline order parameter $\phirot$}
    \label{fig:phase_diagram_phirot}
    \end{subfigure}
    \begin{subfigure}{0.32 \textwidth}
    \centering
    \includegraphics[width = \textwidth]{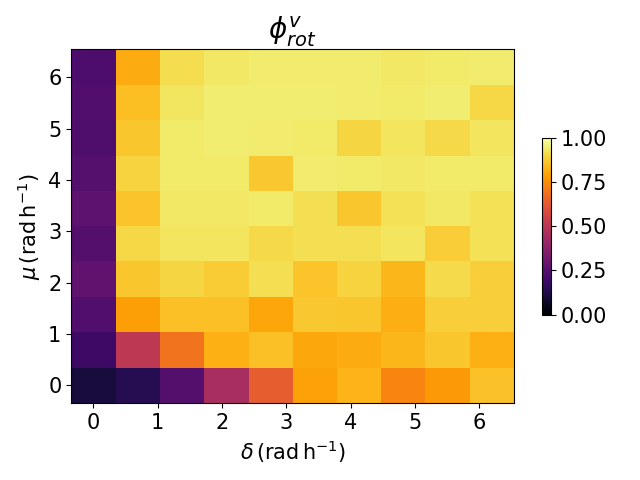}
    \subcaption{\reviewerAA{velocity} rotation\newline order parameter $\phirot^v$}
    \label{fig:phase_diagram_phirot_v}
    \end{subfigure}
    \caption{\reviewerA{(Disk domain) Mean speed, order parameter and rotation order parameters as functions of parameters $\delta$ and $\mu$. Each value is the average over $20$ simulations with parameters $\Delta t = 10^{-2} \h$, $N = 160$, cell density $\rho = 0.707$.}}
    \label{fig:phase_diagram}
\end{figure}

\begin{figure}
    \centering
    \hfill\begin{subfigure}{0.2 \textwidth}
    \includegraphics[width = \textwidth]{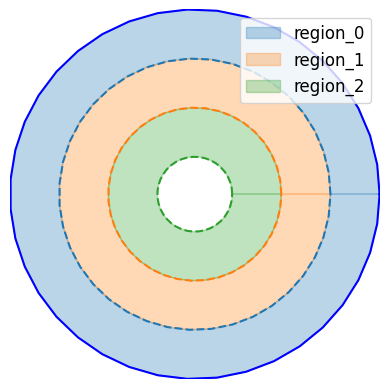}
    \subcaption{Regions}
    \end{subfigure}
    \begin{subfigure}{0.39\textwidth}
    \centering
    \includegraphics[width=0.8 \textwidth]{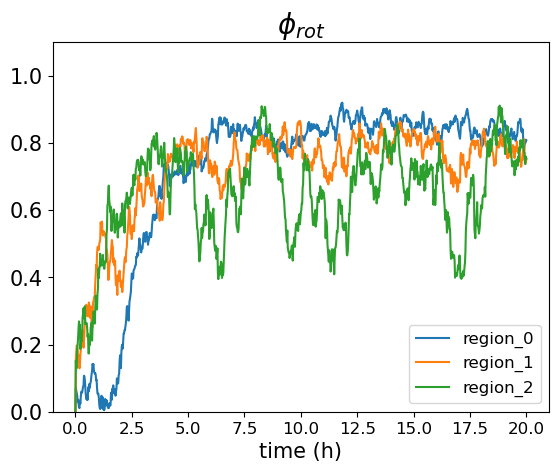}
    \subcaption{$\delta = 6.2 \; \rad\, \h^{-1}$, $\mu = 0 \; \rad\, \h^{-1}$}
    \label{fig:delta_stat}
    \end{subfigure}
    \begin{subfigure}{0.39\textwidth}
    \centering
    \includegraphics[width= 0.8\textwidth]{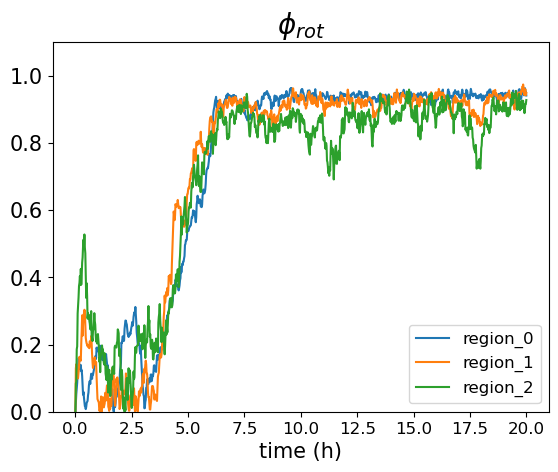}
    \subcaption{$\delta = 6.2 \; \rad\, \h^{-1}$, $\mu = 6.2 \; \rad\, \h^{-1}$}
    \label{fig:delta0_stat}
    \end{subfigure}

    \caption{\reviewerA{(Disk domain) \reviewerAA{Polarity} rotation order parameter as a function of time in different regions of the disk (depicted in the left panel), for two different values of the parameter $\mu$: $\mu = 0\ \rad\, \h^{-1}$ (middle) versus $\mu = 6.2\ \rad\, \h^{-1}$ (right). Parameters: $\Delta t = 10^{-2}\,\h$, $N= 160$, cell density $\rho = 0.707$. }}
    \label{fig:simu_stat_region}
\end{figure}

We now study the influence of density on these collective rotational movements in the disk domain. Figure~\ref{fig:circle_density_disc} describes the variation of the normalized mean speed and the rotation order parameters for densities ranging from $0.044$ up to $0.822$, which is the maximum possible density for this domain. We note that the rotation order parameters increase with density: the same behavior is observed for the Vicsek model with periodic condition. Note, however, that these rotation order parameters already take high values for low densities: this is certainly due to the boundary condition, which induces tangential velocities at the boundary of the domain. Regarding the mean speed, it remains rather close to $1$ and decreases very slightly with density. However, we note a larger variability for the maximal density, \reviewerAA{combined with a slight decrease of the velocity rotation parameter}. This results from a jamming effect which will be studied in further detail in the next section.

\begin{figure}[H]
    \centering
    \begin{subfigure}{0.9 \textwidth}
    \includegraphics[width=\textwidth]{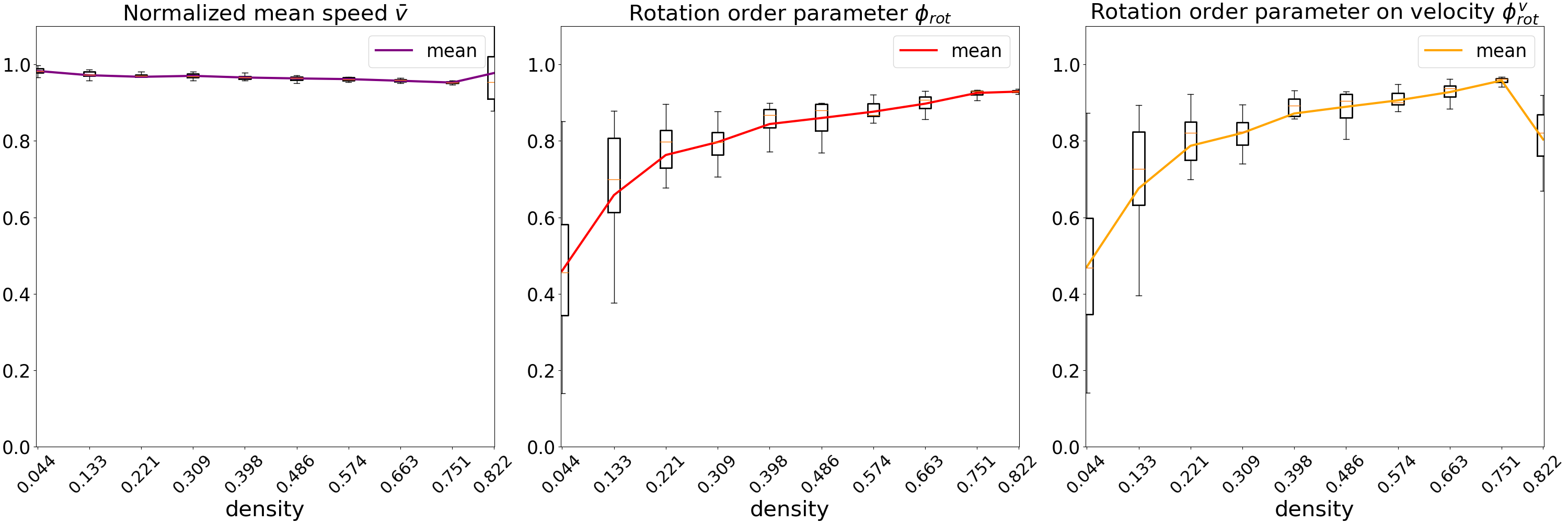}
    \subcaption{Disk domain}
    \label{fig:circle_density_disc}
    \end{subfigure}
    \vspace{0.2cm}
    
    \begin{subfigure}{0.9 \textwidth}
    \includegraphics[width=\textwidth]{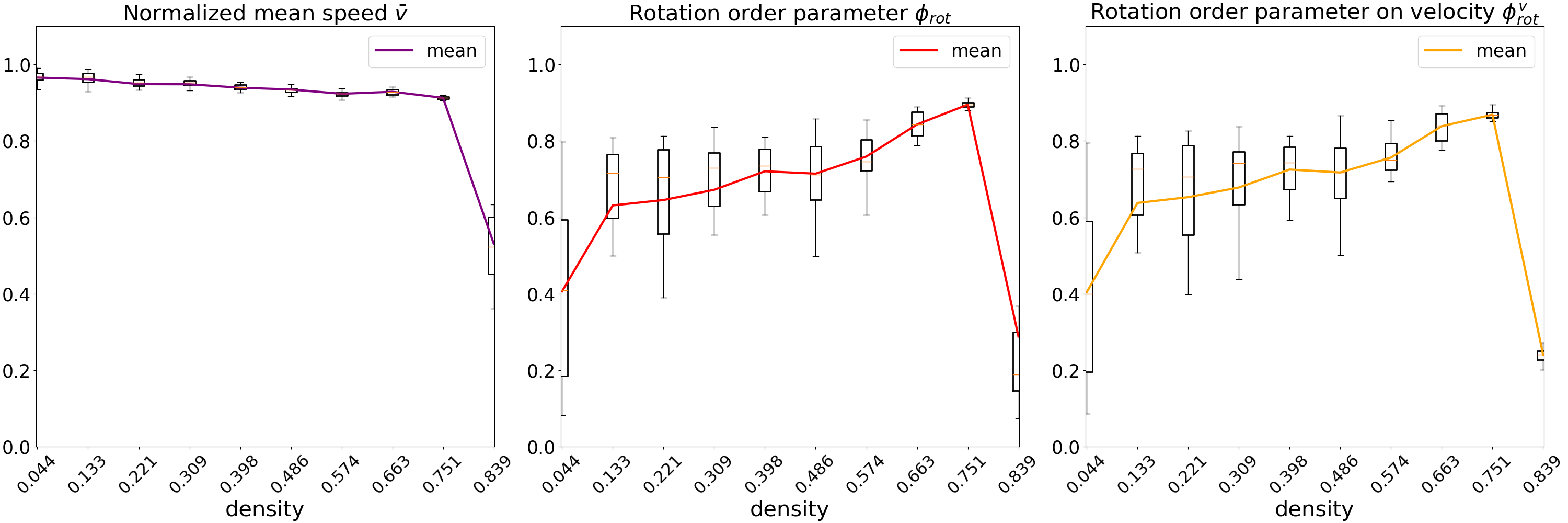}
    \subcaption{\reviewerA{Square domain}}
    \label{fig:circle_density_square}
    \end{subfigure}
    \caption{(Influence of the density and the domain shape) \reviewerA{Normalized global mean speed (left column), polarity rotation order parameter (middle column), \reviewerAA{velocity rotation order parameter (right column)} as functions of the density. Box plots have been obtained with $20$ simulations with parameters: $\Delta t = 10^{-2}\,\h$, $T= 20\,\h$. }}
    \label{fig:circle_density}
\end{figure}

\subsection{Influence of the domain shape and jamming effects}
\label{sec:jamming}

In this section, we investigate the influence of the domain shape on the cell dynamics, with a focus on the possible appearance of jamming. We consider two domains with the same area: a square domain of size $L = 200\, \microm$ and a disk of radius $L/\sqrt{\pi} \, \microm \approx 113 \,\microm$, and perform simulations up to time $T=20\,\h$ with a time step $\Delta t = 10^{-2}\,\h$. To measure the collective motion, we compute the normalized mean speed $\bar v$ and the rotation order parameters, $\phirot$ \reviewerAA{and $\phirot^v$,} defined in Eqs.~\eqref{eq:barv}-\eqref{eq:phirot} and averaged over the last $T/8 = 2.5\h$ of the simulations as computed for the polarity order parameter in Eq.~\eqref{eq:phip}.

\reviewerA{Figure~\ref{fig:circle_density_square} displays the variation of these \reviewerAA{three} indicators as the cell density increases for the square domain.} Again, to account for the stochastic aspects of the dynamics, $20$ different numerical simulations are performed for each panel. \reviewerA{Compared to Fig.~\ref{fig:circle_density_disc}, already commented in the previous section, we observe that the rotation order parameters are lower but still increase with density due to the interactions between cells. For the largest density, the three indicators decrease sharply, showing that the square domain refrains the rotational motion.}

\reviewerA{We present in Fig.~\ref{fig:domaincomparison} a similar analysis, with a focus on high densities.}
 When the domain is a disk (Fig.~\ref{fig:diskdomain}), we observe that \reviewerAA{the three} indicators remain very close to $1$ for the whole range of considered densities\reviewerAA{, with a slight decrease regarding the velocity rotation order parameter}. Indeed, since the diffusion parameter $D$ is small, collective motion is expected to occur. On the contrary, in the square domain (Fig.~\ref{fig:squaredomain}), \reviewerAA{the velocity rotation order parameter decreases across the entire density range,} while the mean speed and the polarity rotation order parameter decrease when the density is larger than $0.826$. We also note that the variability is larger above this threshold for these two quantities. \reviewerAA{These observations indicate that congestion effects prevent velocities from aligning with polarities more strongly as density increases and prevent the development of collective effects on polarity only beyond the density threshold.} 
 
 \reviewerA{Combining the results of Figs.~\ref{fig:domaincomparison} and ~\ref{fig:circle_density}, we infer that the optimal density for having a rotation motion in a square ranges between 0.75 and \reviewerAA{0.80}, whereas all densities larger than 0.75 lead to rotation in a disk. This difference can be explained by the fact that, in such a high-density regime, hexagonal lattice configurations form. While a solid rotation with a hexagonal lattice configuration is possible in the disk domain, this is no more the case in a square domain, where local lattice breaking is required to generate motion.}

\begin{figure}[h]
    \centering
    \begin{subfigure}{\textwidth}
    \includegraphics[width=\textwidth]{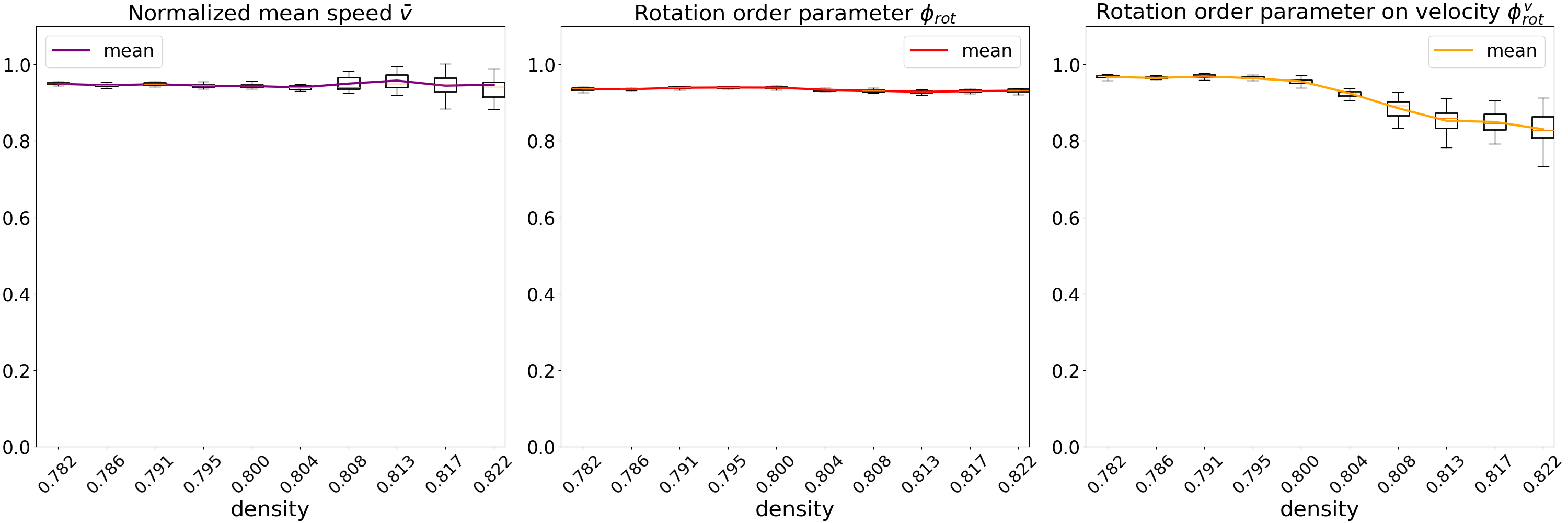}
    \subcaption{Disk domain}
    \label{fig:diskdomain}
    \end{subfigure}
    \vspace{0.2cm}
    
    \begin{subfigure}{\textwidth}
    \includegraphics[width=\textwidth]{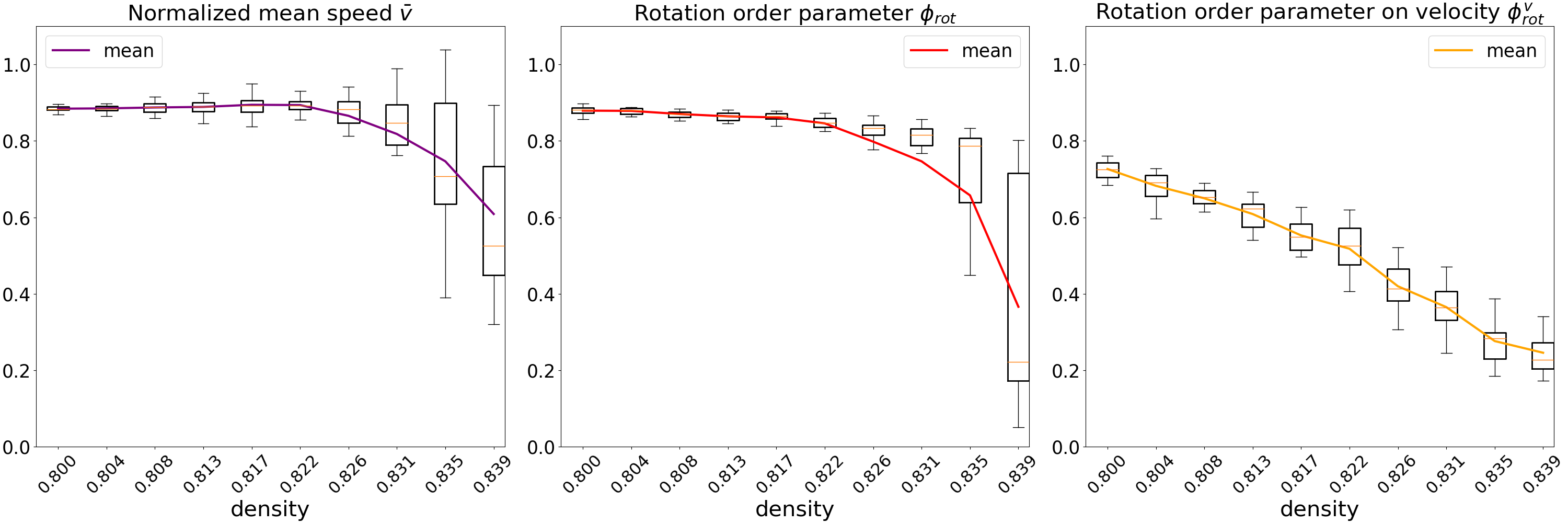}
    \subcaption{Square domain}
    \label{fig:squaredomain}
    \end{subfigure}

    \caption{\reviewerA{(Influence of the density and the domain shape - high densities) Normalized mean speed as a function of the density (left column), polarity rotation order parameter (middle column),
    \reviewerAA{velocity rotation order parameter (right column)}. Box plots have been obtained with $20$ simulations with parameters: $\Delta t = 10^{-2}\,\h$, $T= 20\,\h$.}}
    \label{fig:domaincomparison}
\end{figure}

\paragraph{Influence of obstacles.} To further assess the impact of the domain shape and topology on the collective motion, we add one or more obstacles inside the domain. Each obstacle is a disk of radius $R = 7.5\,\microm$ and could be seen as a non moving cell.

In Fig.~\ref{fig:square_1obstacle}, we display the three previous motion indicators when one obstacle is present in a square domain. Adding one obstacle in the center of the domain \reviewerA{(see Fig.~\ref{fig:square_obstacles_1})} seems to prevent the rotation dynamics. Indeed, the rotation order parameters are slightly smaller than in the case without obstacles 
\reviewerAA{(dashed lines)} for densities above $0.830$. We also observe that the mean speed has a wider variability for large densities. Fig.~\ref{fig:square_obstacles_1_de} shows that the rotation dynamics are further impaired when the obstacle is now located close to the boundary at distance $2R$ from the boundary: only one cell can pass between the obstacle and the border. \reviewerA{When the obstacle is placed near the corner (Fig.~\ref{fig:square_obstacles_1_coin}), the rotation parameters take values at the same level as in the case of a central obstacle: these two positions of the obstacle are the most compatible with rotational dynamics.}

We next consider a domain with four obstacles, first located in the middle of each side and  at a distance $2R$ to the boundary (Fig.~\ref{fig:square_obstacles_4}). We observe that both the mean speed and the rotation order parameters are much lower than in the case without obstacles 
\reviewerAA{(dotted lines)}
and with only one obstacle  on the side 
\reviewerAA{(dashed lines)}.
These obstacles have thus clearly introduced strong constraints on the global motion of the cells. In particular, the mean speed \reviewerAA{and the velocity rotation order parameters} are diminished on the full density range, which indicates that the motion is more jammed \reviewerAA{and velocities deviate a lot from the polarities directions due to the contact forces}. \reviewerA{When placing the four obstacles in the corner (Fig.~\ref{fig:square_obstacles_4coin}), we instead recover \reviewerAA{higher values of rotation order parameters} 
at least for densities lower than $0.819$. To further compare the dynamics in these two cases, Fig.~\ref{fig:square_obstacle_4_dynamic} shows the time evolution of the mean speed and the three order parameters for two representative simulations, one for each obstacles configurations.} \reviewerB{The corresponding movies,  \Mov{4} and \Mov{5}, are provided in the supplementary material.} \reviewerA{The number of cells is equal to $180$, corresponding to a density equal to $0.810$, for which a significant difference in results can be observed in Fig.~\ref{fig:square_4obstacles}. In Fig.~\ref{fig:square_obstacle_4_dynamic}, we observe significant variations in mean speed due to the frequent occurrence of blocked configurations: in much of the domain, the configuration is close to being a compact hexagonal lattice. This is also reflected in the histogram of inter-cell distances (right panels), where we can see that the first peaks are associated with distances of this type of lattice. As observed in the right panels, the appearance of rotation motion seems triggered by the local breaking of this lattice close to the corners.}

Altogether, the numerical results show that the shape and the topology of the simulation domain have a significant impact on the onset of collective movement of cells, translating in particular the ability of the model to capture the influence of the environment`s geometrical characteristics on the cellular dynamics. An ordered state is reproduced as expected in a circular domain, but also for a certain range of cellular densities in a square shaped domain, similar to experimental and numerical results in \cite{szaboetal2006}. Moreover, we highlight the complex role of obstacles: \reviewerA{obstacles placed in the corner of the domain slightly alter the emergence of collective rotation, whereas  obstacles on the sides favor the transition towards a jammed regime.}

\begin{figure}
    \centering
    \begin{subfigure}{\textwidth}
    \begin{minipage}[c]{0.13\textwidth}
    \centering
    \includegraphics[width=\textwidth]{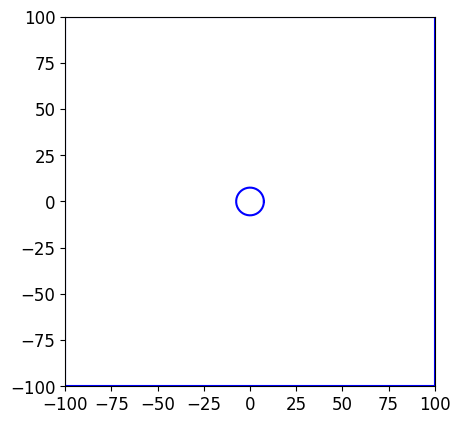}
    \end{minipage}
    \hfill
    \begin{minipage}[c]{0.84\textwidth}
    \centering
    \includegraphics[width=\linewidth]{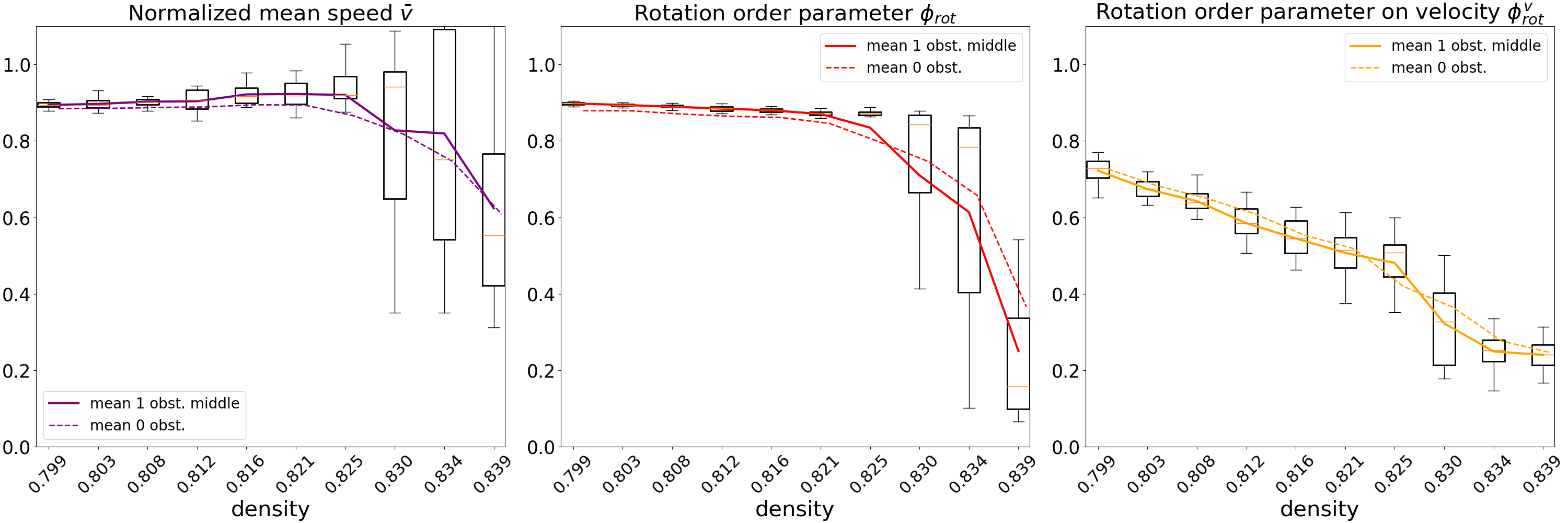}
    \end{minipage}
    \subcaption{1 obstacle in the middle}
    \label{fig:square_obstacles_1}
    \end{subfigure}
    \vspace{0.3cm}
    
    \begin{subfigure}{\textwidth}
    \begin{minipage}[c]{0.13\textwidth}
    \centering
    \includegraphics[width=\textwidth]{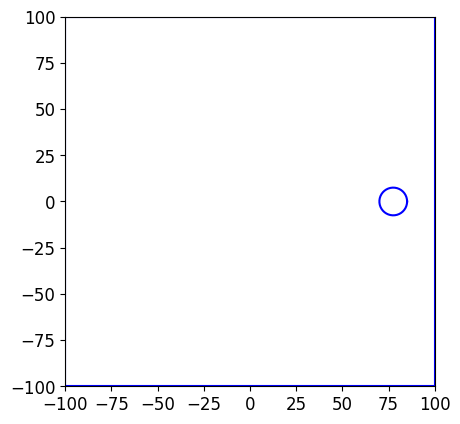}
    \end{minipage}
    \hfill
    \begin{minipage}[c]{0.84\textwidth}
    \centering
    \includegraphics[width=\linewidth]{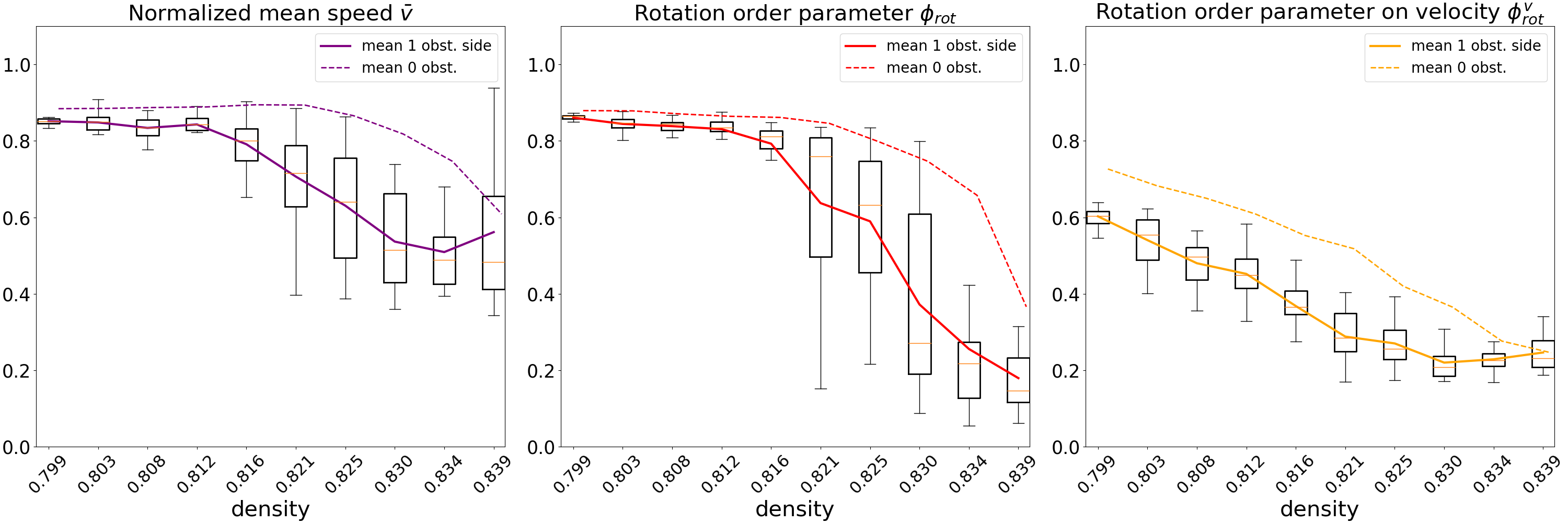}
    \end{minipage}
    \subcaption{1 obstacle on the side}
    \label{fig:square_obstacles_1_de}
    \end{subfigure}
    \vspace{0.3cm}
    
     \begin{subfigure}{\textwidth}
         \begin{minipage}[c]{0.13\textwidth}
    \centering
    \includegraphics[width=\textwidth]{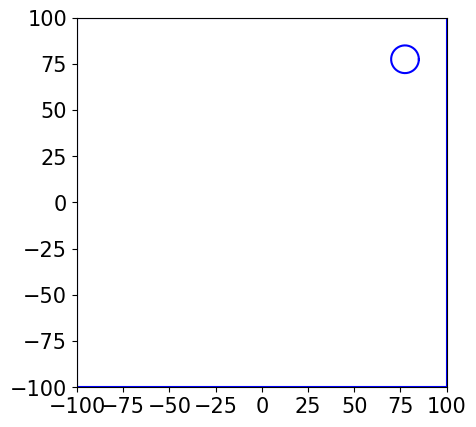}
    \end{minipage}
    \hfill
    \begin{minipage}[c]{0.84\textwidth}
    \centering
    \includegraphics[width=\linewidth]{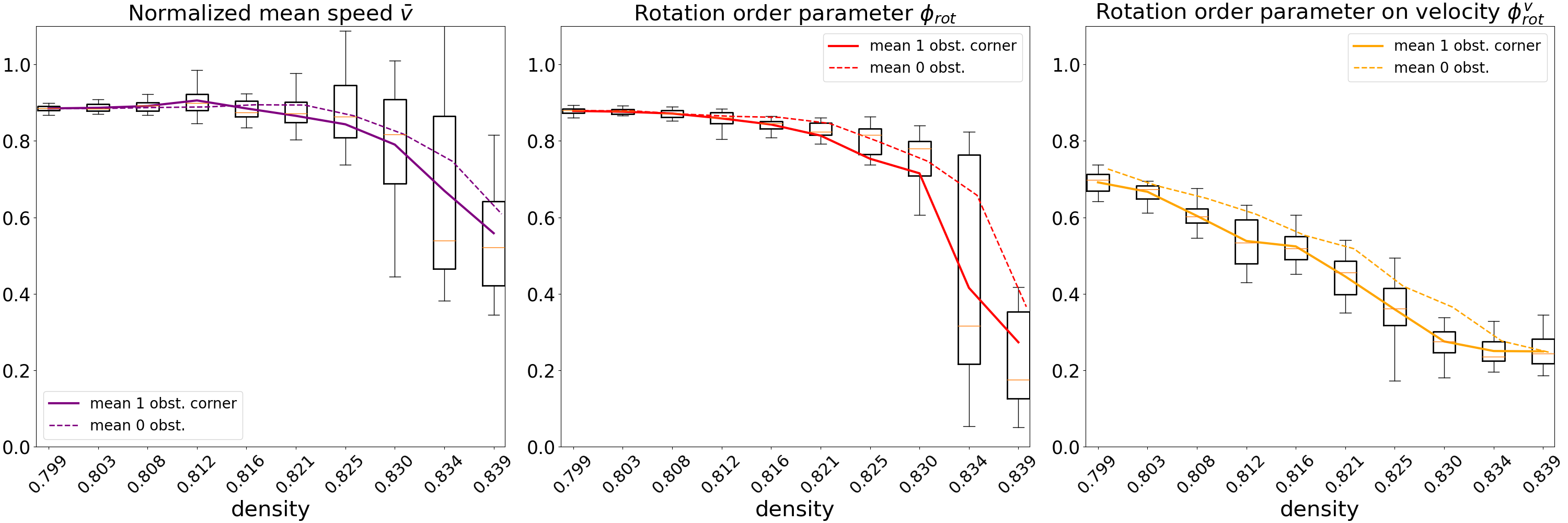}
    \end{minipage}
    \subcaption{\reviewerA{1 obstacle in the corner}}
    \label{fig:square_obstacles_1_coin}
    \end{subfigure}

    \caption{(One obstacle in a square domain) \reviewerA{Normalized mean speed (left column), polarity rotation order parameter (middle column), \reviewerAA{velocity rotation order parameter (right column)} as functions of the density. \reviewerAA{Solid lines: curves with obstacles. Dashed lines: curves without obstacle.} Box plots have been obtained with $20$ simulations with parameters: $\Delta t = 10^{-2}\, \h$, $T= 20\, \h$.}}
    \label{fig:square_1obstacle}
\end{figure}

\begin{figure}
    \centering
    \begin{subfigure}{\textwidth}
    \begin{minipage}[c]{0.13\textwidth}
    \centering
    \includegraphics[width=\textwidth]{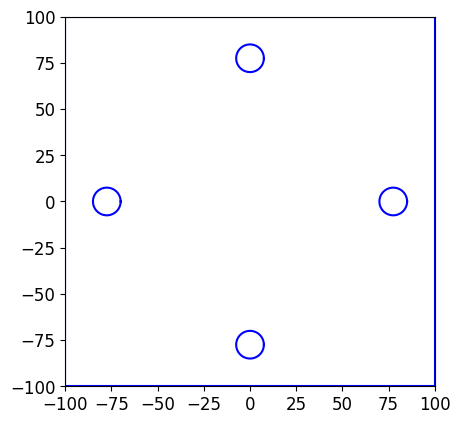}
    \end{minipage}
    \hfill
    \begin{minipage}[c]{0.84\textwidth}
    \centering
    \includegraphics[width=\linewidth]{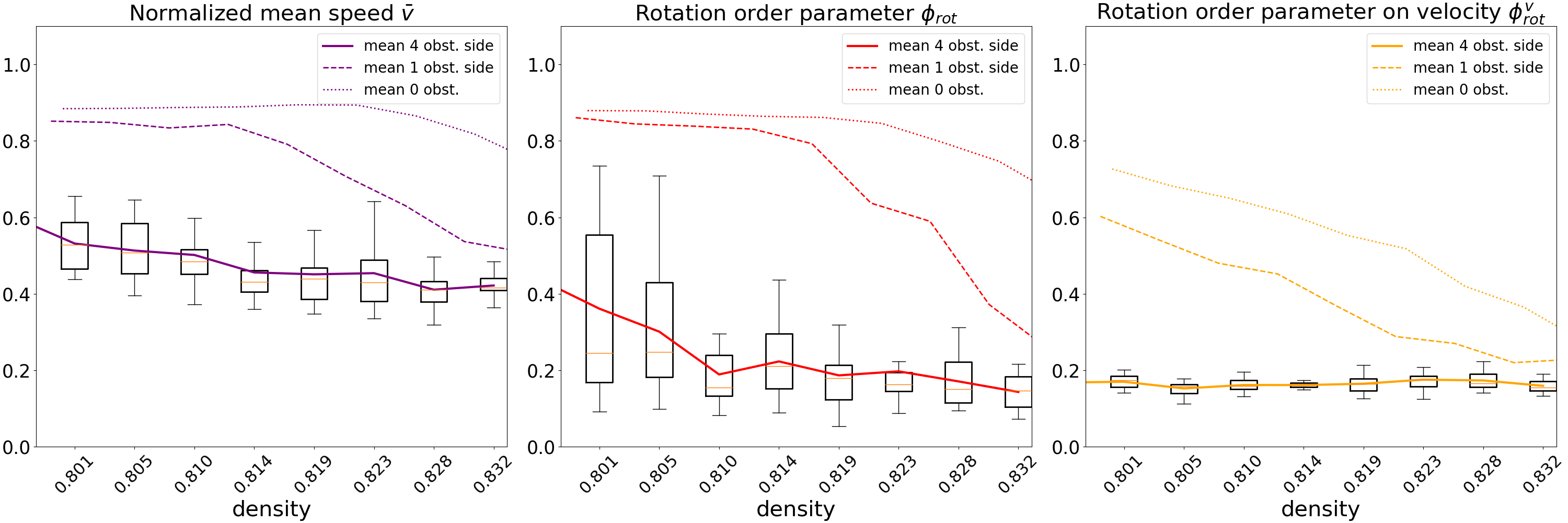}
    \end{minipage}
    \subcaption{4 obstacles on the sides}
    \label{fig:square_obstacles_4}
    \end{subfigure}
    \vspace{0.3cm}
    
    \begin{subfigure}{\textwidth}
    \begin{minipage}[c]{0.13\textwidth}
    \centering
    \includegraphics[width=\textwidth]{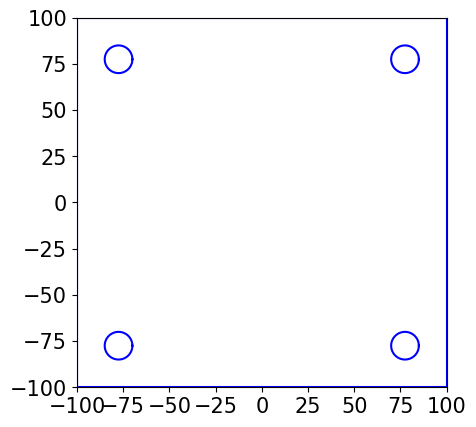}
    \end{minipage}
    \hfill
    \begin{minipage}[c]{0.84\textwidth}
    \centering
    \includegraphics[width=\linewidth]{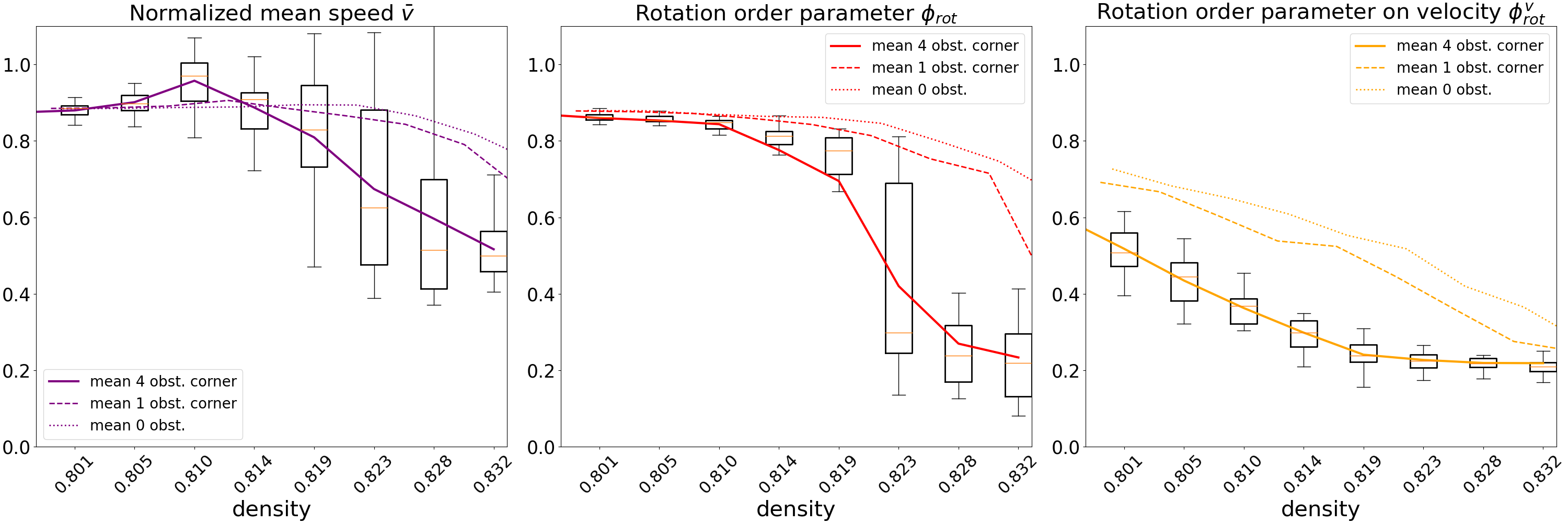}
    \end{minipage}
    \subcaption{\reviewerA{4 obstacles in the corners}}
    \label{fig:square_obstacles_4coin}
    \end{subfigure}
    \caption{(Four obstacles in a square domain) \reviewerA{Normalized mean speed (left column), polarity rotation order parameter (middle column), \reviewerAA{velocity rotation order parameter (right column)} as functions of the density. \reviewerAA{Solid lines: curves with obstacles. Dashed lines: curves with 1 obstacle. Dotted lines: curves without obstacle.} Box plots have been obtained with $20$ simulations with parameters: $\Delta t = 10^{-2}\, \h$, $T= 20\, \h$.}}
    \label{fig:square_4obstacles}
\end{figure}

\begin{figure}
    \centering
    \begin{subfigure}{\textwidth}
    \includegraphics[width=0.3\textwidth]{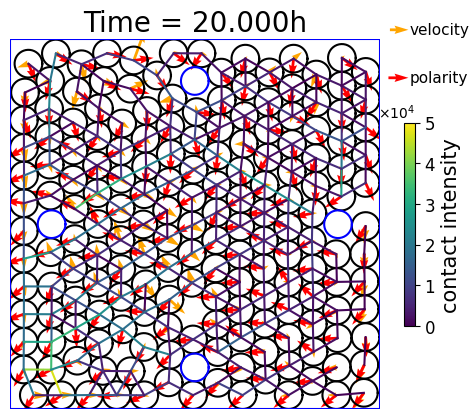}
    \includegraphics[width=0.3\textwidth]{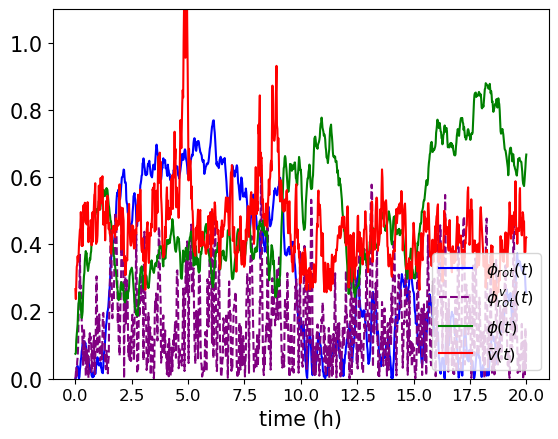}
    \includegraphics[width=0.33\textwidth]{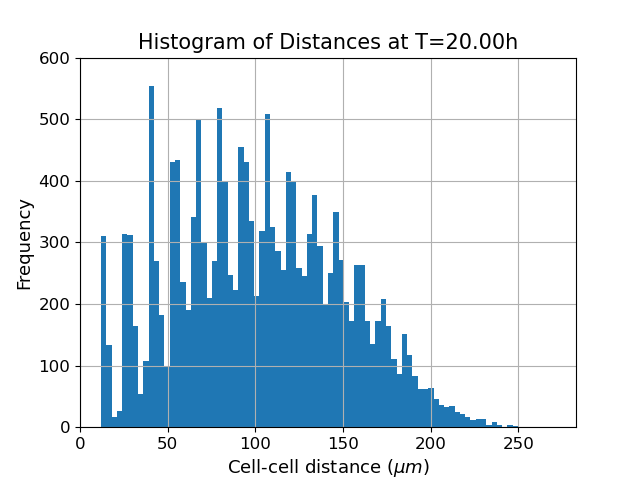}
    \subcaption{4 obstacles on the sides}
    \label{fig:4side}
    \end{subfigure}
    \begin{subfigure}{\textwidth}
    \includegraphics[width=0.3\textwidth]{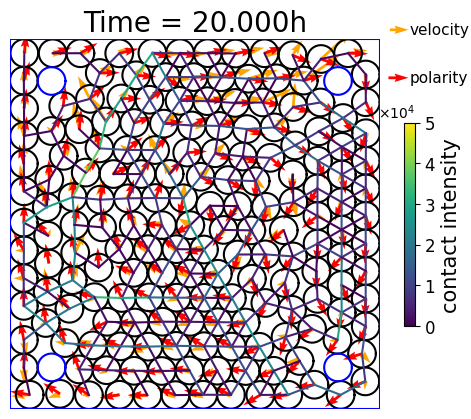}
    \includegraphics[width=0.3\textwidth]{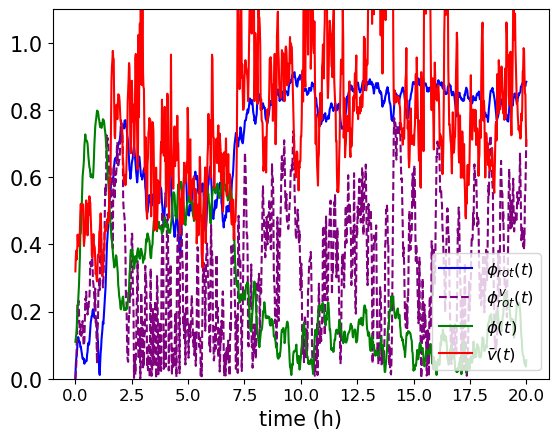}
    \includegraphics[width=0.33\textwidth]{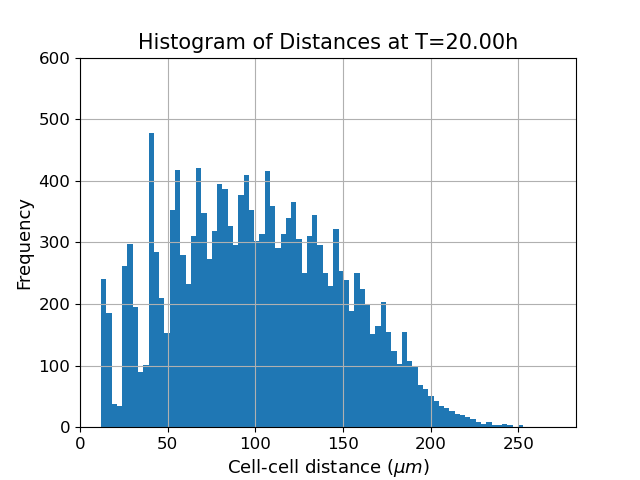}
    \subcaption{4 obstacles in the corners}
    \label{fig:4corner}
    \end{subfigure}
    \caption{\reviewerB{(Four obstacles in a square domain) Cell configuration at final time $T=20\,\h$ (left panel), order parameters during time (middle panel) and histogram of the cell-cell distance at final time. Parameters: $\Delta t = 10^{-2}\,\h$, $N= 180$, cell density $\rho = 0.810$. See also the corresponding \Mov{4} and \Mov{5}.}}
    \label{fig:square_obstacle_4_dynamic}
\end{figure}

\subsection{Influence of the soft attraction-repulsion force}

\label{part: Attraction-repulsion force}

In this section, we aim to investigate the impact of the soft repulsion force that we added in the present model to incorporate elastic interactions and to assess to which extent its variations modify the jamming properties of the system. To make attraction force effective, the attraction-repulsion radius $\Rintar$ is taken equal to $\Rintar = 21 \, \microm$, which is larger than twice the comfort radius.

We first display in Fig.~\ref{fig:hard} results obtained with parameters extracted from Table~\ref{tab:parameter}. We notice that many contacts occur between cells (left panel) and an overall rotational movement emerges (right panel). See also \Mov{6}. We then multiply the rigidity constant $\kappa$ by a factor $16$ and the inverse friction parameter $\gamma$ by a factor $10$ to significantly increase the impact of these interactions\reviewerA{, corresponding to a multiplication by a factor of 160 of the product $\gamma\kappa$}. The resulting cell configuration is very different, see Fig.~\ref{fig:smooth} (\Mov{7}): the effect of the comfort zone around each cell is clearly visible (left panel). Although the density of cells is quite high ($\rho = 0.850$ if we take into account the cell comfort zones), cells undergo a collective rotational motion (right panel). The low value of the order parameter $\phi$ indicates that cells are not fully aligned. This rotational movement may be triggered by the compressibility of the cells. On the contrary, if we remove the comfort zone by  setting $R_c$ to the cell radius $R_0$, we display in Fig.~\ref{fig:mix} (\Mov{8}) a new configuration, where cells act as if there were ``glued'' together and move collectively (left panel). Actually, cells are constantly moving from a border to the opposite one, which explains the oscillatory time evolution of the order parameter $\phi$ (right panel).

The numerical experiments thus demonstrate the effect of including a soft attraction-repulsion force in the model, which can significantly change the dynamics of the cells from a collective ``rotational'' movement in a more or less compact configuration to a ``translational'' collective movement in a packed form.

\begin{figure}
    \centering
    \begin{subfigure}{\textwidth}
    \centering
    \includegraphics[height=5cm]{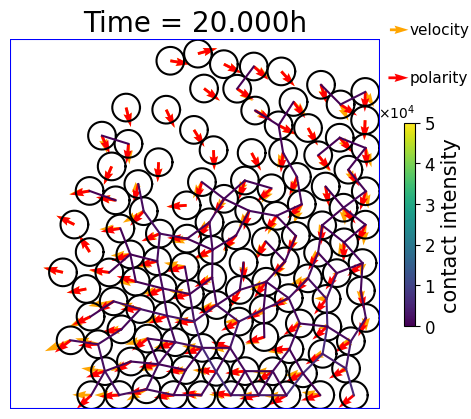}\qquad\qquad\includegraphics[height=5cm]{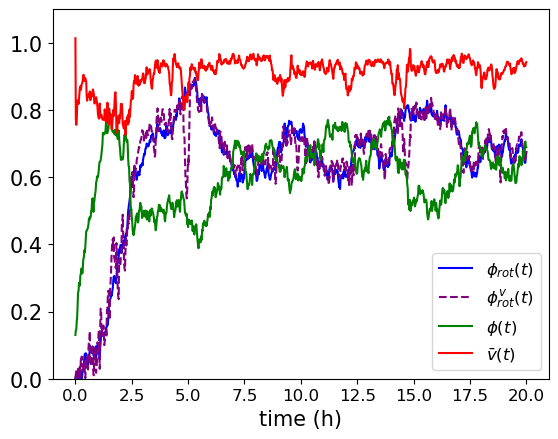}
    \subcaption{$R_c = 9.5\, \microm$, $\kappa= 10^4 \, \pN \, \microm^{-1}$, $\gamma=10^{-5} \, \pN^{-1} \h^{-1} \microm$, $\Rintar = 21 \, \microm$}
    \label{fig:hard}
    \end{subfigure}
    \begin{subfigure}{\textwidth}
    \centering
    \includegraphics[height=5cm]{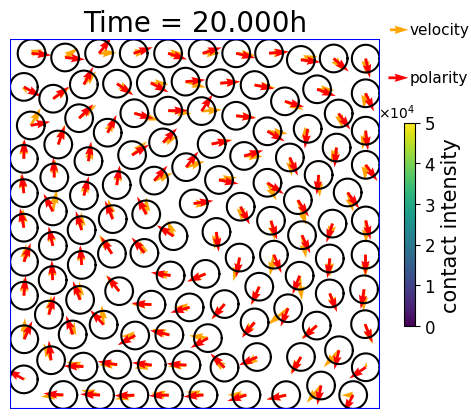}\qquad\qquad
    \includegraphics[height=5cm]{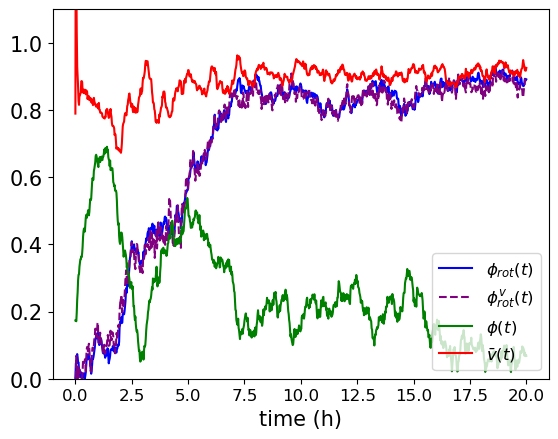}
    \subcaption{$R_c = 9.5\, \microm$, $\kappa= 16.10^4 \, \pN\, \microm^{-1}$, $\gamma=10^{-4} \, \pN^{-1} \h^{-1} \microm$, $\Rintar = 21 \, \microm$}
    \label{fig:smooth}
    \end{subfigure}
    \begin{subfigure}{\textwidth}
    \centering
    \includegraphics[height=5cm]{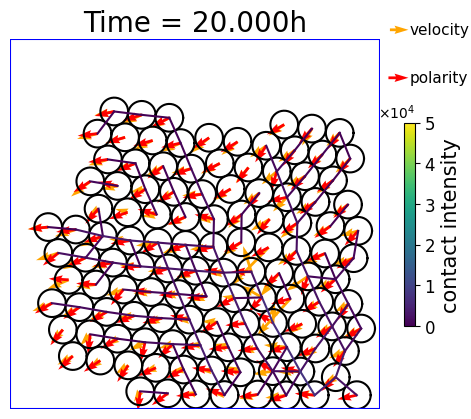}\qquad\qquad
    \includegraphics[height=5cm]{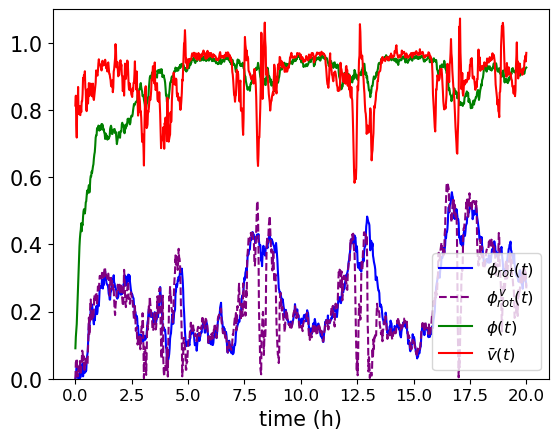}
    \subcaption{$R_c = 7.5\, \microm$, $\kappa= 16.10^4 \, \pN\, \microm^{-1}$, $\gamma=10^{-4} \, \pN^{-1} \h^{-1} \microm$, $\Rintar = 21 \, \microm$}
    \label{fig:mix}
    \end{subfigure}
    \caption{(Influence of the attraction-repulsion force) Cell configuration at final time $T=20\,\h$, (a) with default soft attraction-repulsion forces, (b) with increased soft attraction-repulsion forces, (c) with increased soft attraction but no soft repulsion forces. Parameters: $\Delta t = 10^{-2}\,\h$, $N= 120$, cell density $\rho = 0.530$. In panels (a) and (b), if the comfort zone is taken into account, cell density equals $\rho = 0.851$. Left column: polarities are represented by red vectors, velocities by orange vectors and the contact forces by edges between the cells where they are activated. See also comments caption Fig. 2. Right column: Normalized global mean speed (in red), \reviewerAA{polarity} rotation order parameter (in blue), \reviewerAA{velocity rotation order parameter (in purple),} and order parameter (in green) as functions of time. See the corresponding \Mov{6}, \Mov{7}, \Mov{8}. }
\end{figure}

\section{Conclusion and outlook}
\label{sec:conclusion}

We proposed in the present contribution a Vicsek-type model with contact forces to describe the dynamics of cells in a domain with wall-type boundaries. After studying the well-posedness of the \reviewerA{problem}, we introduced an adequate discrete version of the system and proposed a suitable numerical method to solve it. Finally, we evaluated the performance of our {\it in silico} approach through a systematic battery of numerical tests. Let us summarize their findings. First, order-disorder phase transition is naturally recovered when considering periodic boundary conditions and increasing the diffusion coefficient. Next, we showed that the velocity feedback on the polarity plays a crucial role to generate rotational movement in bounded domains. We equally demonstrated that, for large cell densities, the shape of the domain has a significant impact on the collective motion: rotational motion is easier in a disk than in a square. Adding an obstacle plays a complex role: it may or may not \reviewerA{prevent} the rotation movement to emerge, depending on its distance to the boundary and on the number of obstacles in the domain. Finally, by increasing the soft attraction-repulsion interactions, comfort zones can be effective and the compressibility of cells can also play a favorable role in triggering the rotational movement.

The perspectives of this work are multiple. First, the transition between flocking and elastic regimes would merit further investigation, as it can notably depend on the chosen attraction-repulsion potential. The parameters used in the simulations have been mostly taken from \cite{nature_phys}, where they have been calibrated to capture the {\em in vitro} dynamics of MDCK (Madin-Darby Canine Kidney) cells. Therefore, other parameter calibrations would be necessary to tackle elastic regimes observed in different biological systems. Additionally, more biological features could be incorporated, such as cell division or cell apoptosis. Finally, the analysis of the emerging rheological properties at the tissue scale is certainly the most challenging perspective.

\paragraph{Acknowledgements.} The authors warmly thank Romain Levayer and Daniel Riveline for their stimulating discussions. This research was funded by l’Agence Nationale de la Recherche (ANR), project
ANR-22-CE45-0028. 

\appendix

\section{Implementation and computational time}
\label{appendix:comput_time}
    
    \reviewerB{In this section, we provide information regarding the implementation of the model.}

    \reviewerB{The numerical method has been implemented in an in-house Python code. Each simulation starts with the initialization of the cell configuration. It is obtained as follows: the positions of the cells are randomly taken uniformly over the domain, the velocities are initially all of norm $c$, angles are uniformly distributed in $[0,2\pi)$ and the polarities are aligned with the velocities. A few steps of the algorithm achieve a configuration without overlapping cells. This sets the initial configuration. The most time demanding part of the code concerns the Uzawa algorithm, utilized to compute the contact forces. To optimize it, sparse matrix structures from \texttt{SciPy.sparse} have been used. The simulations for a single test were performed on a MacBook Pro M3, while those containing a statistical study were run in parallel on a computing server (a cluster of 6 nodes, each equipped with 2 AMD 64-core processors).} 
    
    \reviewerB{With regard to execution time, Table~\ref{tab:density} shows the results obtained for simulations in a square domain with a final time $T = 20$h and a time step $\Delta t = 10^{-2}$h. At a fixed density, increasing the number of cells and the size of the domain leads to a linear increase in computation time. We also provide the associated number of Uzawa iterations. As this number remains of a similar order of magnitude, the increase in execution time is solely due to the size of the matrices involved. It should be noted that the number of non-zero entries in these matrices corresponds to the number of contacts, which increases almost linearly with the number of cells. In Table~\ref{tab:lentgh}, we examine the increase in computational time as the density increases in a fixed domain. While the computation time increases almost linearly with the number of cells up to the density of 0.70, a sharp increase is observed for a density equal to 0.839 both for initialization and simulation times. This results from the increase in the number of Uzawa iterations due to congested configurations.}

    \begin{table}
    \centering
    \begin{tabular}{c|cccccc}
         number of cells & 50 & 100 & 200 & 300 & 400 & 500 \\
         length $(\mu m)$ & 112 & 159 & 225 & 275 & 318 & 355 \\
         \hline
         initialization time (s) & 0.36 & 0.86 & 1.81 & 3.13 & 8.14 & 12.85\\
         simulation time (s) & 26.67 & 42.59 & 86.00 & 131.3 & 194.35 & 258.85\\
         nb of Uzawa iterations & 34750 &30391 & 29929 & 26417 & 27607 & 26755 
    \end{tabular}
    \caption{\reviewerB{(Computational time - fixed density) Initialization time, simulation time  and total number of Uzawa iterations for simulations in a square domain with increasing lengths and a fixed density of cells $\rho = 0.7$, final time $T = 20$h and time step $\Delta t = 10^{-2}$h.}}
    \label{tab:density}
\end{table}

\begin{table}
    \centering
    \begin{tabular}{c|ccccc}
         number of cells &  40 & 80 & 120 & 160 & 190\\
         density ($\rho$) & 0.177 & 0.353 & 0.530 & 0.707 & 0.839 \\
         \hline
         initialization time (s) & 0.08 & 0.15 & 0.25 & 0.89 & 25.45 \\
         simulation time (s) & 17.66 & 33.74 & 46.61 & 65.16 & 139.03 \\
         nb of Uzawa iterations & 30367 & 30281 & 29159 & 27629 & 90445
    \end{tabular}
    \caption{ \reviewerB{(Computational time - increasing density) Initialization time, simulation time  and total number of Uzawa iterations for simulations in a square domains with a fixed size (200 $\mu$m) and increasing number of cells, final time $T = 20$h and time step $\Delta t = 10^{-2}$h.}}
    \label{tab:lentgh}
\end{table}

\section{List of supplementary videos}
\label{app:videos}

Here we present the list of additional videos, which can be accessed by following this link: \url{https://seafile.unistra.fr/d/472a60ad9ce546e6b80b/}. For each simulation, the final time is $20$h, which corresponds to $2000$ iterations. In the videos, polarities are represented by red vectors, velocities by orange vectors, and contact forces, when activated, by edges between cells. The edges are colored according to the magnitude of the contact forces.

\paragraph{Influence of the velocity feedback and polarity alignment}

\begin{itemize}
    \item \Mov{1} (\verb|circle_160_delta0_mu6.2.mov|) is the movie corresponding to Figure~\ref{fig:delta0_mu6.2}, with $N = 160$ cells and deactivated velocity feedback ($\delta = 0 \; \rad\, \h^{-1}, \mu = 6.2 \; \rad\, \h^{-1} $). 
    \item \Mov{2} (\verb|circle_160_delta6.2_mu0.mov|) is the movie corresponding to Figure~\ref{fig:delta6.2_mu0}, with $N = 160$ cells with deactivated polarity alignment.   ($\delta = 6.2 \; \rad\, \h^{-1}, \mu = 0 \; \rad\, \h^{-1} $). 
    \item \Mov{3} (\verb|circle_160_delta6.2_mu6.2.mov|) is the movie corresponding to Figure~\ref{fig:delta6.2_mu6.2}, with $N = 160$ cells with activated velocity feedback and polarity alignment.  ($\delta = 6.2 \; \rad\, \h^{-1}, \mu = 6.2 \; \rad\, \h^{-1} $). 
\end{itemize}

\paragraph{Influence of the position of obstacles}
\begin{itemize}
    \item \Mov{4} (\verb|square_180_4obstacles_middle.mov|) \reviewerB{is the movie corresponding to Figure~\ref{fig:4side}, with $N=180$ cells and $4$ obstacles added in the middle of each side. The parameters are those from Table~\ref{tab:parameter}.}
    \item \Mov{5} (\verb|square_180_4obstacles_corner.mov|) \reviewerB{is the movie corresponding to Figure~\ref{fig:4corner}, with $N=180$ cells and $4$ obstacles added in each corner. The parameters are those from Table~\ref{tab:parameter}.}
\end{itemize}

\paragraph{Influence of the soft attraction-repulsion force}
\begin{itemize}
    \item \Mov{6} (\verb|square_120_default_parameters.mov|) is the movie corresponding to Figure~\ref{fig:hard}, with $N = 120$ cells and default soft attraction-repulsion force. The parameters are those from Table~\ref{tab:parameter}, but with $\Rintar = 21 \, \microm$. 
    \item \Mov{7} (\verb|square_120_strong_attraction_repulsion.mov|) is the movie corresponding to Figure~\ref{fig:smooth}, with $N = 120$ cells and increased soft attraction-repulsion force ($\kappa= 16.10^4 \, \pN\, \microm^{-1}$, $\gamma=10^{-4} \, \pN^{-1} \h^{-1} \microm$, $\Rintar = 21 \, \microm$).
    \item \Mov{8} (\verb|square_120_strong_attraction.mov|) is the movie corresponding to Figure~\ref{fig:mix}, with $N = 120$ cells, increased soft attraction but no soft repulsion force ($\kappa= 16.10^4 \, \pN\, \microm^{-1}$, $\gamma=10^{-4} \, \pN^{-1} \h^{-1} \microm$, $R_c = 7.5\, \microm$, $\Rintar = 21 \, \microm$).
\end{itemize}

\bibliographystyle{plain}
\bibliography{biblio}

\end{document}